\newtheorem{theorem}{Theorem}
\theoremstyle{definition}
\newtheorem{definition}{Definition}
\newtheorem{lemma}{Lemma}
\newtheorem{corollary}{Corollary}
\numberwithin{equation}{section}
\numberwithin{theorem}{section}
\numberwithin{corollary}{section}
\numberwithin{definition}{section}
\numberwithin{lemma}{section}
\numberwithin{table}{section}
\numberwithin{figure}{section}
\DeclareMathOperator*{\argmin}{arg\,min}
\begin{document}

\begin{center}
\title[]{Estimation of the Distribution of Random Parameters in Discrete Time Abstract Parabolic Systems with Unbounded Input and Output: Approximation and Convergence$^\dagger$}
\end{center}

\author{Melike Sirlanci}
\address{Department of Computing and Mathematical Sciences, California Institute of Technology}
\ead{melikesirlanci@gmail.com}

\author{Susan E. Luczak}
\address{Department of Psychology, University of Southern California}
\ead{luczak@usc.edu}

\author{I. G. Rosen$^*$}
\address{Department of Mathematics, University of Southern California}
\ead{grosen@math.usc.edu}

\begin{abstract}
A finite dimensional abstract approximation and convergence theory is developed for estimation of the distribution of random parameters in infinite dimensional discrete time linear systems with dynamics described by regularly dissipative operators and involving, in general, unbounded input and output operators.  By taking expectations, the system is re-cast as an equivalent abstract parabolic system in a Gelfand triple of Bochner spaces wherein the random parameters become new space-like variables. Estimating their distribution is now analogous to estimating a spatially varying coefficient in a standard deterministic parabolic system. The estimation problems are approximated by a sequence of finite dimensional problems. Convergence is established using a state space-varying version of the Trotter-Kato semigroup approximation theorem.  Numerical results for a number of examples involving the estimation of exponential families of densities for random parameters in a diffusion equation with boundary input and output are presented and discussed.
\end{abstract}

\vspace{2pc}
\noindent{\it Keywords}: Distribution estimation, Random parameters, Distributed parameter systems, Abstract parabolic systems, Regularly dissipative operators.

\vspace{1pc}
\noindent{$\dagger$ This research was supported in part by grants R21AA17711 and R01AA026368-01 from the National Institute of Alcohol Abuse and Alcoholism (NIAAA).  

\noindent$^*$Corresponding author.} 

\section{Introduction}\label{intro}

The work we report on here was motivated by a compound inverse or blind deconvolution problem involving the interpretation of data from a transdermal alcohol biosensor. The observation (dating back to the 1930’s \cite{Palmlov1936,Swift1993,Swift2000,Swift2003,Swift1992}) that ethanol is highly miscible and finds its way into all the water in the body, and in particular, sweat, has in the past two decades, led to the development of technology to measure the amount of ethanol excreted from the body transdermally (i.e. through the skin) through perspiration and to then use it to quantitatively assess intoxication level. The basis for the measurement is an oxidation-reduction (redox) reaction that produces four electrons for each ethanol molecule oxidized.  This results in a continuous current whose level is proportional to the amount of ethanol evaporating from the surface of the skin beneath the sensor.  Now while these devices have been available and in use, both experimentally and commercially, for a number of years, they have been used primarily as abstinence monitors because transdermal alcohol level or concentration (TAC) data cannot consistently be converted to breath and blood alcohol concentrations (BrAC/BAC) across individuals, devices, and environmental conditions.  (BAC and BrAC are currently, and historically have been, the standard measures of intoxication among alcohol researchers and clinicians, as well as in the courts.)  Indeed, unlike a breath analyzer, which relies on a relatively simple model from basic chemistry (i.e., Henry’s Law) for the exchange of gases between circulating pulmonary blood and alveolar air (see, for example, \cite{Lab1990}) that has been found to be reasonably robust across the population, the transport and filtering of alcohol by the skin is physiologically more complex and is affected by a number of factors that differ across individuals (e.g., skin layer thickness, porosity and tortuosity, etc.) and even drinking episodes within individuals (e.g., body and ambient temperature, skin hydration, vasodilation). The challenge in making these devices practicable is to develop a means to reliably convert biosensor measured TAC into BAC or BrAC.  

In our earlier work (\cite{DLR,Dum2008,RLW}) we have taken a strictly deterministic approach to converting TAC to either BAC or BrAC.  We fit first principles physics-based models in the form of a distributed parameter (diffusion) system with unbounded input and output, and used individual calibration data to capture the dynamics of the forward process - the propagation of alcohol from the blood, through the skin, and its measurement by the sensor (i.e. the forward model) – by estimating the parameters (diffusivity, input/output gain, propagation inertia, etc.) that appear in the model via nonlinear least squares.  Then in a second phase of processing, we use the fit model to deconvolve BAC or BrAC from the TAC signal measured by the biosensor in the field.  However, because of the challenges described above, this approach was not entirely satisfying.   Indeed, while it was possible to fit the models quite well to any particular drinking episode, we observed significant variance in the values of the parameters across different individuals and across different drinking episodes for the same individual.   Consequently, the fit models did not yield the desired level of accuracy when they were used to deconvolve BAC or BrAC from TAC for a drinking episode that they were not specifically trained on.

To deal with this problem we have been looking at the idea of fitting a population forward model (having BAC or BrAC as input and TAC as output) in the form of a random partial differential equation, to data from multiple drinking episodes and multiple individuals and then using the population model to solve the deconvolution problem.  Fitting a population model of this form implies that rather than estimate particular values for the parameters, we treat the parameters as random variables and estimate their distributions.   In this way, it will become possible to produce not only an estimate for the BAC or BrAC, but also some form of credible bands to go along with it providing a quantitative estimate of the level of uncertainty in the estimate.

The basic underlying assumption in such an approach is that our first principles physics/physiological based model in essence, describes the dynamics common to the entire population (population interpreted broadly here to include not only all individuals, but also all devices, environmental conditions, and in effect, all ethanol molecules) and to then attribute all unmodeled sources of uncertainty (primarily due to variations in physiology, hardware, and the environment) observed in individual data to random effects. Moreover, we assume that what we observe in any individual data set is the combination or average of these random effects.  Thus, this approach is realized by letting the parameters in the PDE model be random variables, the distributions of which are to be estimated based on aggregate population data. 

In this paper, we develop an abstract approximation framework and convergence theory for formulating and solving just such an estimation problem.  In addition to the theory, we have also included a number of examples and numerical results.  However, we do not discuss here the application of these ideas to either the alcohol biosensor problem discussed above or even the deconvolution problem.  Those results are presented elsewhere (\cite{SRACC,SR2017,SR2018}).  In our treatment here, we are strictly concerned with the problem of estimating the distributions of random parameters in a forward model from a particular class of abstract linear infinite dimensional systems for which the input is known and observations of the output for a sampling of members of the target population are available. That is, we are referring to the problem of fitting the population model.

The class of systems we consider here are those governed by abstract parabolic or hyperbolic operators with damping formulated in a Gelfand triple setting together with input and observations on the boundary of the domain. These types of operators are sometimes referred to as being regularly dissipative, and can typically be shown to generate holomorphic or analytic semigroups.  We formulate the estimation problem in much the same way as it is in standard linear regression. That is, that each data point is assumed to be an observation of the mean population behavior plus random error. We then formulate the estimation problem as an optimization problem over the space of feasible distributions for the random parameters.  The objective of the optimization problem is to minimize prediction error in the form of the difference between the observed output signal and the expectation of the output of the model. We then consider a sequence of approximating estimation problems in each of which the infinite dimensional system is replaced by a finite dimensional approximating system.  We then demonstrate that under appropriate (and readily verifiable) assumptions, the solutions to the approximating estimation problems converge to a solution to the original estimation problem with the infinite dimensional state.  These convergence results are formulated in a functional analytic or operator theoretic setting and are based on ideas and results from linear semigroup theory.  

Our general approach relies heavily on three relatively recent papers: 1) Banks and Thompson's \cite{BT} framework for the estimation of probability measures in random abstract evolution equations and the convergence of finite dimensional approximations in the Prohorov metric, 2) a more recent and enhanced version of the previous paper, \cite{BT1}, and 3) Gittelson, Andreev, and Schwab's \cite{GAS} theory for random abstract parabolic partial differential equations with dynamics defined in terms of coercive sesquilinear forms.  While our effort here is similar in spirit and takes its cue from the treatment in \cite{BT1} and \cite{BT}, it is somewhat different in that we are forced to assume that the probability measures that describe the distribution of our random parameters can be defined in terms of a joint density function; that is, that the random parameters are jointly absolutely continuous.  

The approach in \cite{GAS} is novel in the way that it treats the random parameters in the PDE as another “space-like” independent variable. This is done by appropriately defining corresponding Bochner spaces in which the weak formulation of the problem is stated and shown to be well-posed. In fact, it turns out that the random parameter dependent regularly dissipative operators that determine the underlying PDE are regularly dissipative when embedded in these Bochner spaces. Consequently, we are able to use linear semigroup theory to develop our approximation framework in much the same way as we have in our earlier deterministic treatments. In this way, finite dimensional approximation is handled in much the same way that it is for the standard deterministic space variables, and the estimation of the distribution of the random parameters effectively becomes analogous to the problem of estimating a variable coefficient in a deterministic PDE, a problem which has been studied extensively over the last thirty years (\cite{BL1985} and \cite{BD1985}).

We use the framework in \cite{GAS} together with generation and approximation results from linear semigroup theory, (i.e. the Hille-Yosida-Phillips theorem and a version of the Trotter Kato approximation theorem) to establish that the sufficient conditions for a Banks – Thompson-like convergence result are satisfied.  These theoretical results allow us to develop rigorously established convergent computational algorithms that yield numerical approximations to the desired distributions.  Moreover, the solutions in the Bochner spaces and their finite dimensional approximations directly capture the explicit dependence of the state and output (and eventually the deconvolved input) on the random parameters. Using this together with the estimated distributions for the random parameters, it becomes straight forward to directly identify credible intervals for the output without having to re-solve the PDE many times as you would if you were attempting to identify these credible intervals by naive sampling. 

An outline of the remainder of the paper is as follows. In Section \eqref{est.rand.sys} we formally develop the estimation problem, reformulate it as a nonlinear least squares optimization problem and establish the existence of solutions. In Section \eqref{abs.par} we discuss infinite dimensional systems described by regularly dissipative operators involving unbounded input and output (this is typically the case for a PDE with input and output on the boundary). In Section \eqref{rand.reg} we discuss the framework in \cite{GAS} for treating systems of the form discussed in Section \eqref{abs.par} but now involving random parameters. Our approximation and convergence results are presented in Section \eqref{appr.conv} and a discussion of examples and our numerical results are in Section \eqref{num.res}. Section \eqref{conc.rem} has a few concluding remarks regarding where we plan to go next with this line of research. 

In our discussions to follow we will on occasion use the notation $E[X||f]$, $\mathbb{E}[X||F]$, or $E[X||\pi]$ to denote the expectation of the random variable $X$ with respect to the probability density function $f$, the cumulative distribution function $F$, or the probability measure $\pi$.  We use the "double bar" as opposed to a "single bar" to distinguish what we mean here with conditional expectation.

\section{Estimation of Random Discrete Time Dynamical Systems}\label{est.rand.sys}

We consider the family of discrete or sampled time initial value problems that are set in an, in general, infinite dimensional Hilbert state space, $\mathcal{H}$, given by
\begin{align}
x_{j+1,i}&=g(t_j,x_{j,i},u_i;q), \ \ j=0,...,n_i, \ i=1,2,...,m,\label{eq2.1}\\
x_{0,i}&=x_{0,i}(q), \ \ i=1,2,...,m,\label{eq2.2}
\end{align}
where $g:\mathbb{R}^+ \times \mathcal{H} \times \prod_{j=0}^{n_i}\mathbb{R}^\mu \times Q \rightarrow \mathcal{H}$ and for $j=0,...,n_i$ and $i=1,2,...,m$, $u_i=\{u_{i,j}\}$ is an external input or control with $u_{i,j} \in \mathbb{R}^\mu$, and $t_j=j\tau$, with $\tau>0$ the length of the sampling interval, describing the dynamics of a process common to the entire population. In addition, we assume that we can observe some function of the solutions of \eqref{eq2.1}-\eqref{eq2.2}, $x_{j,i}$, as given by the output equation
\begin{align}
y_{j,i}=y(t_j,x_{0,i},u_i;q)=C(x_{j,i},x_{0,i},u_i;q),\ \ j=0,...,n_i, \ i=1,2,...,m,\label{eq2.3}
\end{align}

\noindent where $C:\mathcal{H} \times \mathcal{H} \times \prod_{j=0}^{n_i}\mathbb{R}^\mu \times Q \rightarrow \mathbb{R}^\nu$. 

In equations \eqref{eq2.1}-\eqref{eq2.3}, we assume $q\in Q$, where $Q$ is the set of admissible parameters (a subset of Euclidean space endowed with Lebesgue measure), and the values of the parameters are specific to each individual in the population. Therefore, assuming that the parameters, $q$, are samples from a random vector $\mathcal{q}$, the objective is to estimate their (joint) distribution based on the aggregate data sampled from the population. For this purpose, we assume that the distribution of these random vectors is described by the joint pdf $f_0\in\mathcal{F}(Q)$, where $\mathcal{F}(Q)$ represents a set of feasible pdfs with support in $Q$. 

There are a number of ways to formulate the statistical model that will be used as the basis for the estimation of the distribution of the random parameters. One approach is to treat \eqref{eq2.1}- \eqref{eq2.3} as an, in general, nonlinear mixed effects model (see, for example, \cite{Davidian,DandG,Demidenko,SR2017}) wherein randomness in the parameters, $q$, are used to quantify uncertainty between subjects, and randomness in the output or measurements, $y_{j,i}$ given in \eqref{eq2.3} is intended to capture uncertainty within individual subjects.  In this case we assume that the observed data points are of the form
\begin{align}
V_{j,i}=y_{j,i} + \varepsilon_{j,i}, \ \ j=0,...,n_i, \ i=1,...,m,\nonumber
\end{align}
where $\varepsilon_{j,i}, \ j=0,...,n_i, \ i=1,...,m$, representing measurement noise are assumed to be independent across subjects (i.e with repsect to $i$), conditionally independent with respect to $\mathcal{q}$ within subjects (i.e with repsect to $j$), identically distributed with mean 0 and known common variance $\sigma^2$, and with $\varepsilon_{j,i} \sim \varphi, \ j=0,...,n_i, \ i=1,...,m$. In this case, for example, using conditional probability and the total probability formula, a likelihood function could be defined formally as
\begin{align}
\begin{aligned}
    \mathcal{L}(f_0;\{V_{j,i}\}) = \prod_{i=1}^m \int_Q L_i(q;\{V_{j,i}\})f_0(q) dq = \prod_{i=1}^m \int_Q \prod_{j=0}^{n_i} \varphi(V_{j,i} - C(x_{j,i},x_{0,i},u_i;q))f_0(q) dq.\nonumber
\end{aligned}
\end{align}
Once one deals with a number of computational issues, specifically, the discretization or parameterization of $f_0$, finite dimensional approximation of the in general infinite dimensional state equation \eqref{eq2.1}, the efficient evaluation of a potentially high dimensional integral, the loss of precision and underflow issues due to the fact that the evaluation of $\mathcal{L}$ requires the computation of products of small numbers, etc., one could then seek a maximum likelihood estimator for $f_0$ by maximizing $\mathcal{L}$ or, more typically, an expression involving $log \mathcal{L}(f_0;\{V_{j,i}\})$ to avoid having to deal with the products. Under appropriate regularity assumptions on $\varphi$, $f_0$, and the system \eqref{eq2.1}- \eqref{eq2.3}, one way to do this might be via a gradient based search.  Another might be via stochastic optimization. One could also treat direct observations of $\mathcal{q}$ as missing data and then use the iterative E-M algorithm to find the MLE (see, for example, \cite{CB}). 

Alternatively, one could use the likelihood function defined above and take a Bayesian approach (see, for example, \cite{Bui,Bui1,IPBF,DS,SR2018,AS}).  One way of doing this would be to assume $f_0=f_0(\cdot;\rho)$ has been parameterized by a parameter vector $\rho \in \mathcal{R}$, where $\mathcal{R}$ denotes a parameter set. Then assume a prior $\mathcal{p}$ on $\rho$ and apply Bayes to obtain the posterior $\hat{\mathcal{p}}$ as
\begin{align}
\begin{aligned}
    \hat{\mathcal{p}}(\rho) = \hat{\mathcal{p}}(\rho|\{V_{j,i}\})         =\frac{1}{Z}\hat{\mathcal{L}}(\rho;\{V_{j,i}\})\mathcal{p}(\rho)=\frac{1}{Z}\mathcal{L}(f_0(\cdot;\rho);\{V_{j,i}\})\mathcal{p}(\rho).\nonumber
\end{aligned}
\end{align}
where $Z$ is the normalizing constant given by
\begin{align}
\begin{aligned}
    Z = \int_\mathcal{R}\hat{\mathcal{L}}(\rho;\{V_{j,i}\})\mathcal{p}(\rho)d\rho = \int_\mathcal{R}\prod_{i=1}^m \int_Q \prod_{j=0}^{n_i} \varphi(V_{j,i} - C(x_{j,i},x_{0,i},u_i;q))f_0(q;\rho)dq \mathcal{p}(\rho)d\rho.\nonumber
\end{aligned}
\end{align}
Still another Bayesian approach could be used to estimate the distribution of $\mathcal{q} \sim f_0$ directly where now the posterior for $\mathcal{q}$, $\hat{\mathcal{p}}=\hat{\mathcal{p}}(q)$ serves as the estimator for $f_0$.  In this case we assume that $\varepsilon_{j,i}, \ j=0,...,n_i, \ i=1,...,m$ are simply independent both across and within subjects, identically distributed with mean 0 and known common variance $\sigma^2$, and with $\varepsilon_{j,i} \sim \varphi, \ j=0,...,n_i, \ i=1,...,m$. If we now let $\mathcal{p}$ denote the prior for $\mathcal{q}$, then Bayes yields
\begin{align}
\begin{aligned}
    \hat{\mathcal{p}}(q) = \hat{\mathcal{p}}(q|\{V_{j,i}\}) =\frac{1}{Z}\prod_{i=1}^m L_i(q;\{V_{j,i}\})\mathcal{p}(q) = \frac{1}{Z}\prod_{i=1}^m \prod_{j=0}^{n_i} \varphi(V_{j,i} - C(x_{j,i},x_{0,i},u_i;q))\mathcal{p}(q),\nonumber
\end{aligned}
\end{align}
where the normalizing constant $Z$ is now given by
\begin{align}
\begin{aligned}
    Z = \int_Q \prod_{i=1}^m L_i(q;\{V_{j,i}\})\mathcal{p}(q)dq = \int_Q \prod_{i=1}^m\prod_{j=0}^{n_i} \varphi(V_{j,i} - C(x_{j,i},x_{0,i},u_i;q))\mathcal{p}(q)dq.\nonumber
\end{aligned}
\end{align}
Both of these Bayesian approaches also have some of the same computational issues as the MLE approach when some sort of MCMC technique such as Metropolis-Hastings or the Gibbs Sampler is used to sample the posterior distribution. 

In our study here, however, we take a statistically somewhat less sophisticated approach.  We consider the naive pooled data estimator.  We do this for a number of reasons. 1) Our primary focus here is the finite dimensional approximation of the infinite dimensional state equation and the convergence of the corresponding estimators and the computational challenges described above would only serve to confound our findings, 2) The naive pooled estimator meshes especially well with the approach we take in dealing with the randomness in the family of PDEs (i.e. abstract parabolic, and eventually, damped hyperbolic) of particular interest to us here in the context of the alcohol biosensor problem described earlier. 3) A reasonable argument could be made that the data we observe is best described as pooled or averaged.  We note that it in fact turns out that the approximation and convergence results we present here are highly relevant to the MLE and Bayesian approaches described in the previous paragraphs; we are currently investigating that and we will report on our findings and results in those cases elsewhere.  Finally it is interesting to note that in the Bayesian approach, if the prior $f_0$ and the distribution of the measurement noise process, $\varepsilon_{j,i}$, as described by the density $\varphi$ are both assumed to be normal, then the naive pooled data estimator we find here is in fact the Maximum A-Posteriori, or MAP,  estimator.     

In light of this, our statistical model assumes that the observed data points can be represented by the mean output of the model plus random error. Thus, we assume that we have random observations of the process given by a random array with components
\begin{align}
V_{j,i}=\mathbb{E}[y_{j,i}||f_0]+\varepsilon_{j,i}, \ \ j=0,...,n_i, \ i=1,...,m,\label{eq2.4}
\end{align}
where in \eqref{eq2.4}, $\varepsilon_{j,i}, \ j=0,...,n_i, \ i=1,...,m$, represent measurement noise and are assumed to be independent and identically distributed with mean 0 and known common variance $\sigma^2$. For $f\in\mathcal{F}(Q)$, define
\begin{align}
v_i(t_j;f)=\mathbb{E}[y(t_j,x_{0,i},u_i;\mathcal{q})||f]=\int_Q C(x_{j,i},x_{0,i},u_i;q)f(q)dq,\label{eq2.5}
\end{align}
the mean behavior at time $t_j$, $j=0,...,n_i$, if $\mathcal{q}\sim f$.

The estimation problem is to estimate the pdf, $f_0$, using a least squares approach 
\begin{align}
\hat{f}=\argmin_{f\in\mathcal{F}(Q)}J(f;V)=\argmin_{f\in\mathcal{F}(Q)}\sum_{i=1}^m\sum_{j=0}^{n_{i}}(V_{j,i}-v_i(t_j;f))^2.\label{eq2.6}
\end{align}
where the $v_i(t_j;f)$ are as given in \eqref{eq2.5}.

Solving the optimization problem given in \eqref{eq2.6} will typically require finite dimensional approximation of the dynamical system given in \eqref{eq2.1}-\eqref{eq2.2}, and the parameterization of the feasible set of pdfs, $\mathcal{F}(Q)$. Indeed, in our treatment here, we assume that the set of pdfs, $\mathcal{F}(Q)$, is parameterized by a vector of parameters $\theta\in\Theta$, where $\Theta\subseteq\mathbb{R}^r$ is a set of feasible parameters. In this case, we denote the set of pdfs by $\mathcal{F}_{\Theta}(Q)$. 

We approximate the estimation problem given in \eqref{eq2.6} by a sequence of finite dimensional estimation problems by replacing $v_i(t_j;f)$ with a finite dimensional approximation $v_i^N(t_j;f)$.  We obtain  
\begin{align}
    \hat{f}^N=\argmin_{f\in\mathcal{F}_{\Theta}(Q)}J^N(f;V)=\argmin_{f\in\mathcal{F}_{\Theta}(Q)}\sum_{i=1}^m\sum_{j=0}^{n_i}(V_{j,i}-v_i^N(t_j;f))^2.\label{eq2.7}
\end{align}

We note that ultimately, we will want to dispense with the assumption that $\mathcal{F}(Q)$ has been parametrized by the finite dimensional parameter $\theta\in\Theta$ and actually estimate the shape of $f$ directly. In this case, $\mathcal{F}(Q)$ will also have to be approximated or discretized with the level, or dimension of the parameterization having to grow in order to establish convergence. We are currently studying this extension to the results presented here and will discuss our findings elsewhere. Analogous to theorem 5.1 in \cite{BT}, we have the following convergence result for the $\hat{f}^{N}$'s.

\begin{theorem}\label{th2.1}
Let $\Theta\subseteq\mathbb{R}^r$ be compact. If

\textbf{A.} The maps on $\Theta$, $\theta\mapsto f(q;\theta)$, for almost every $q\in Q$, and $\theta\mapsto J^N(f(\cdot;\theta);V)$, for all $N$ and $f\in\mathcal{F}_{\Theta}(Q)$ are continuous,

\textbf{B.} For any sequence of densities $f_N\in\mathcal{F}_{\Theta}(Q)$ with $\lim_{N\rightarrow\infty}f_N(q)=f(q)$, a.e. $q\in Q$, for some $f\in\mathcal{F}_{\Theta}(Q)$, we have $v_i^N(t_j;f_N)$ converging to $v_i(t_j;f)$ for all $i\in\{1,...,m\}$ and $j\in\{0,...,n_i\}$ as $N\rightarrow\infty$, and

\textbf{C.} The $v_i(t_j;f)$ and $v_i^N(t_j;f)$ are uniformly bounded for all $j\in\{0,...,n_i\}$, $i\in\{1,...,m\}$ and $f\in\mathcal{F}_{\Theta}(Q)$,

\noindent then it will follow that there exist solutions $\hat{f}^{N}$ to the estimation problems over $\mathcal{F}_{\Theta}(Q)$, given in \eqref{eq2.7}, and there exists a subsequence of the $\hat{f}^{N}$'s that converges to a solution $\hat{f}$ of the estimation problem over $\mathcal{F}_{\Theta}(Q)$ given in \eqref{eq2.6}.
\end{theorem}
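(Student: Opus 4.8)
The plan is to use the classical compactness-and-continuity argument for consistency of approximate minimizers, with the coupled approximation hypothesis \textbf{B} playing the part normally played by $\Gamma$- or epi-convergence of the cost functionals, and the compactness of $\Theta$ supplying everything else.

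First I would establish existence of the $\hat f^N$. Identifying $\mathcal{F}_\Theta(Q)$ with $\Theta$ through the parametrization $\theta \mapsto f(\cdot;\theta)$, hypothesis \textbf{A} says precisely that $\theta \mapsto J^N(f(\cdot;\theta);V)$ is a real-valued continuous function on $\Theta$, while hypothesis \textbf{C} guarantees it is finite-valued (the $v_i^N(t_j;\cdot)$, hence the $J^N$, are well defined and bounded). Since $\Theta$ is compact, the minimum is attained at some $\theta^N \in \Theta$, and $\hat f^N := f(\cdot;\theta^N)$ solves \eqref{eq2.7}.

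Next I would pass to a subsequence. The sequence $\{\theta^N\} \subseteq \Theta$ has a convergent subsequence $\theta^{N_k} \to \bar\theta \in \Theta$, and by the first half of \textbf{A}, $f(q;\theta^{N_k}) \to f(q;\bar\theta)$ for a.e.\ $q\in Q$; put $\hat f := f(\cdot;\bar\theta) \in \mathcal{F}_\Theta(Q)$. To see that $\hat f$ solves \eqref{eq2.6} over $\mathcal{F}_\Theta(Q)$, fix an arbitrary competitor $f \in \mathcal{F}_\Theta(Q)$ and start from the optimality inequality $J^{N_k}(\hat f^{N_k};V) \le J^{N_k}(f;V)$, valid for every $k$. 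On the right-hand side, apply \textbf{B} to the constant sequence $f_N \equiv f$ to get $v_i^N(t_j;f) \to v_i(t_j;f)$ for each $i,j$; since $J^N(f;V)$ is one fixed continuous function of the finitely many reals $v_i^N(t_j;f)$, this yields $J^{N_k}(f;V) \to J(f;V)$. On the left-hand side, apply \textbf{B} to the sequence whose $N$-th term is $\hat f^{N}$ when $N$ belongs to the index set $\{N_k\}$ and is $\hat f$ otherwise; this sequence still converges pointwise a.e.\ to $\hat f$, so $v_i^{N_k}(t_j;\hat f^{N_k}) \to v_i(t_j;\hat f)$ for every $i,j$, whence $J^{N_k}(\hat f^{N_k};V) \to J(\hat f;V)$. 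Letting $k\to\infty$ in the optimality inequality gives $J(\hat f;V) \le J(f;V)$; since $f$ was an arbitrary element of $\mathcal{F}_\Theta(Q)$, $\hat f$ is a minimizer, and in particular \eqref{eq2.6} admits a solution.

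The one genuinely delicate step is the limit passage on the left-hand side: there the density $\hat f^{N_k}$ and the approximation index move in tandem, so one must invoke \textbf{B} in exactly the coupled form in which it is stated; and since \textbf{B} is phrased for full sequences $(f_N)_N$ while our minimizers were extracted along a subsequence, the interleaving device above (or, equivalently, passing to a further subsequence) is needed to make the hypothesis applicable. Everything else is routine: compactness of $\Theta$ gives both the existence of the $\hat f^N$ and the convergent parameter subsequence, \textbf{A} converts parameter convergence into a.e.\ convergence of densities, \textbf{C} keeps all functionals finite, and because the cost is a finite sum of squares no uniform-integrability argument beyond \textbf{C} is needed to interchange limit and summation.
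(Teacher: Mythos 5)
Your proof is correct and follows essentially the same route as the paper: existence of each $\hat f^N$ from compactness of $\Theta$ and hypothesis \textbf{A}, extraction of a convergent parameter subsequence, and passage to the limit in the optimality inequality $J^{N_k}(\hat f^{N_k};V)\le J^{N_k}(f;V)$ using \textbf{B} on both sides. The only differences are cosmetic refinements on your part — replacing the paper's explicit bound $|J^N(f_N;V)-J(f;V)|\le M\sum_{i,j}|v_i(t_j;f)-v_i^N(t_j;f_N)|$ (which is where the paper invokes \textbf{C}) by the observation that $J^N$ is a continuous function of the finitely many values $v_i^N(t_j;\cdot)$, and making explicit the interleaving needed to apply \textbf{B}, which is stated for full sequences, along the extracted subsequence.
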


\begin{proof} Finding the solution to the problem in \eqref{eq2.7} is equivalent to finding the parameters $\theta\in\Theta$ such that $J^N(f;V)$ is minimized. Since $\Theta$ is a compact set and the map $\theta\rightarrow J^N(f(\cdot;\theta);V)$ is continuous for all $N$ by (A), a solution $\hat{f}^{N}$ to the estimation problem \eqref{eq2.7} over $\mathcal{F}_{\Theta}(Q)$ exists.

Next, let $\{f_N\}\subseteq\mathcal{F}_{\Theta}(Q)$ be any sequence with $\lim_{N\rightarrow\infty}f_N(q)=f(q)$, a.e. $q\in Q$ for some $f\in\mathcal{F}_{\Theta}(Q)$ and consider that
\begin{align}
    \begin{aligned}
    |J^N(f_N;V)-J(f;V)|&=|\sum_{i=1}^m\sum_{j=0}^{n_i}(V_{j,i}-v_i^N(t_j;f_N))^2\\
    &\quad-\sum_{i=1}^m\sum_{j=0}^{n_i}(V_{j,i}-v_i(t_j;f))^2|\\
    &\leq\sum_{i=1}^m\sum_{j=0}^{n_i}|2V_{j,i}-(v_i(t_j;f)+v_i^N(t_j;f_N))|\\
    &\quad\cdot|v_i(t_j;f)-v_i^N(t_j;f_N)|\\
    &\leq M\sum_{i=1}^m\sum_{j=0}^{n_i}|v_i(t_j;f)-v_i^N(t_j;f_N)|,\nonumber
    \end{aligned}
\end{align}
for some $M>0$, since $v_i(t_j;f)$ and $v_i^N(t_j;f)$ are uniformly bounded for all $i\in\{1,...,m\}$ and $j\in\{0,...,n_i\}$ (by assumption (C)), and $f\in\mathcal{F}_{\Theta}(Q)$. Then, by (B), we obtain
\begin{align}
J^N(f_N;V)\rightarrow J(f;V),\label{eq2.8}
\end{align}
as $N\rightarrow\infty$. On the other hand, since $\hat{f}^{N}=\hat{f}(\cdot;\hat{\theta}^N)$, where $\hat{\theta}^N\in\Theta$, is the minimizer of $J^N(f;V)$, we have
\begin{align}
J^N(\hat{f}^{N};V)\leq J^{N}(f;V),\label{eq2.9}
\end{align}
for all $f=f(\cdot;\theta)\in\mathcal{F}_{\Theta}(Q)$ and $N = 1,2,...$. Since $\{\hat{\theta}^N\}\subset\Theta$, compact, there exists a subsequence $\hat{\theta}^{N_k}$ with $\hat{\theta}^{N_k}\rightarrow\hat{\theta}$ as $k\rightarrow\infty$. Thus, taking the limit as $k\rightarrow\infty$ in \eqref{eq2.9} with $N$ replaced by $N_k$, and using \eqref{eq2.8} (with $f^N_k = f$, all $k = 1,2,...$ when the limit is taken on the right hand side of \eqref{eq2.9}), we obtain
\begin{align}
J(\hat{f};V)\leq J(f;V), \label{eq2.10}
\end{align}
for all $f\in\mathcal{F}_{\Theta}(Q)$, where $\hat{f} = \hat{f}(\cdot;\hat{\theta})$. Thus, \eqref{eq2.10} implies that $\hat{f}$ is a solution of estimation problem given in \eqref{eq2.6} over $\mathcal{F}_{\Theta}(Q)$.
\end{proof}

\section{Abstract Parabolic Systems with Unbounded Input and Output}\label{abs.par}

Let $V$ and $H$ be in general complex (but in many instances, real would suffice) Hilbert spaces with $V\hookrightarrow H$, i.e. $V$ is continuously and densely embedded in $H$. By identifying $H$ with its dual $H^*$, we obtain the Gelfand triple $V\hookrightarrow H\hookrightarrow V^*$. Let $<\cdot,\cdot>_H$ denote the $H$ inner product and $|\cdot|_H$, $||\cdot||_V$ denote norms on $H$ and $V$, respectively, and assume that $(Q,d_Q)$ is a compact metric space contained in Euclidean space endowed with Lebesgue measure. In what follows all multi-dimensional vectors, whether in Euclidean or some abstract space, are assumed to be column vectors, unless explicitly stated otherwise.  For $q\in Q$, let $a(q;\cdot,\cdot):V\times V\rightarrow\mathbb{C}$ be a sesquilinear form that has the following properties

\begin{enumerate}[i.]
\item \textbf{Boundedness} \ \ There exists a constant $\alpha_0 > 0$ such that $|a(q;\psi_1,\psi_2)|\leq\alpha_0||\psi_1||_V ||\psi_2||_V$, \ \ $\psi_1,\psi_2\in V$, \ $q\in Q$,
\item \textbf{Coercivity} \ \ There exist constants $\lambda_0 \in \mathbb{R}$ and $\mu_0 > 0$ such that $a(q_1;\psi,\psi)+\lambda_0|\psi|_H^2\geq\mu_0||\psi||_V^2$, \ \ $\psi\in V$, \ $q\in Q$,
\item \textbf{Measurability} \ \ For all $\psi_1,\psi_2\in V$, the map $q\mapsto a(q;\psi_1,\psi_2)$ is measurable on\ \ $Q$ with respect to all measures defined in terms of the densities in $\mathcal{F}_{\Theta}(Q)$, where $\Theta\subseteq\mathbb{R}^r$ is the set of feasible parameters.
\end{enumerate}

Assume further that $b(q),c(q)$ are respectively $\mu$ and $\nu$ dimensional row vectors in $V^*$ with the maps $q\mapsto<b(q),\psi>_{V^*,V}$ and $q\mapsto<c(q),\psi>_{V^*,V}$ measurable on $Q$ for $\psi \in V$, where $<\cdot,\cdot>_{V^*,V}$ denotes the duality pairing between $V$ and $V^*$.  We consider the system which is written in weak form as
\begin{align}
    \begin{aligned}
        \left<\dot{x},\psi\right>_{V^*,V}+a(q;x,\psi)&=\left<b(q),\psi\right>_{V^*,V}u, \ \ \psi\in V,\\
        x(0)&=x_{0} \in H,\\
        y(t)&=\int_{0}^{T}\left<c(q),x_{(t)}(s)\right>_{V^*,V}ds,\label{eq3.1}
    \end{aligned}
\end{align}
where $T>0$, and $\varphi_{(t)}(s)=\varphi(t-s)\chi_{[0,t]}(s)$, $s \in [0,T]$. For $u\in L_2([0,T],\mathbb{R}^\mu)$, it can be shown that \eqref{eq3.1} has a unique solution (see \cite{LJL,TNB}) $x\in W(0,T):=\{\psi:\psi\in L_2([0,T],V), \dot{\psi} \in L_2([0,T],V^*)\} \subseteq C([0,T],H)$ which depends continuously on $u\in L_2([0,T],\mathbb{R}^\mu)$. It follows that $y\in L_2([0,T],\mathbb{R}^\nu)$.

For $q\in Q$, under the assumptions (i),(ii), the sesquilinear form $a(q;\cdot,\cdot)$ defines a bounded linear operator $A(q):V\rightarrow V^*$ by $<A(q)\psi_1,\psi_2>_{V^*,V}=-a(q;\psi_1,\psi_2)$ where $\psi_1,\psi_2\in V$. It can be shown further that (see \cite{BI,BK,TNB}) $A(q)$ restricted to the set $Dom(A(q)) = \{\phi \in V: A(q)\phi \in H\}$ is the infinitesimal generator of a holomorphic or analytic semigroup of bounded linear operators on $H$. Moreover, this semigroup can be restricted to be a holomorphic semigroup on $V$ and extended to a holomorphic semigroup on $V^*$ by appropriately restricting or extending the domain, $Dom(A(q))$, of the operator $A(q)$ (see, for example, \cite{BI} and \cite{TNB}).

For $q\in Q$, define the operators $B(q):\mathbb{R}^{\mu}\rightarrow V^*$ by $\left<B(q)u,\varphi\right>_{V^*,V}=\left<b(q),\varphi\right>_{V^*,V}u$ and $C(q):L_2([0,T],V)\rightarrow\mathbb{R}^{\nu}$ by $C(q)\psi=\int_{0}^{T}\left<c(q),\psi(s)\right>_{V^*,V}ds$, for $u\in\mathbb{R}^{\mu}$, $\varphi\in V$, and $\psi \in L_2([0,T],V)$, and rewrite the system in \eqref{eq3.1} as
\begin{align}
\begin{aligned}
    \dot{x}(t)&=A(q)x(t)+B(q)u(t),\\
    x(0)&=x_0,\\
    y(t)&=C(q)x_{(t)}, \ \ t>0.\label{eq3.2}
\end{aligned}
\end{align}
The mild solution of \eqref{eq3.2} is given by the variation of constants formula as
\begin{align}
    x(t;q)=e^{A(q)t}x_0+\int_0^t e^{A(q)(t-s)}B(q)u(s)ds, \ \ t\geq 0.\label{eq3.3}
\end{align}
Moreover, since the semigroup $\{e^{A(q)t}: t \geq 0 \}$ is analytic it follows that 
\begin{align}
    y(t;q)=C(q)x_{(t)}(q) = \int_{0}^{T}\left<c(q),x_{(t)}(s;q)\right>_{V^*,V}ds, \ \ t\geq 0.\label{eq3.4}
\end{align}
is well defined.

\subsection{The Discrete Time Formulation}\label{discrete.time}

Now let $\tau > 0$ be a sampling time and consider zero-order hold inputs of the form $u(t)=u_j$, $t\in[j\tau,(j+1)\tau)$, $j=0,1,2,...$. Setting $x_j=x(j\tau)$,  for $j=0,1,2,...$, \eqref{eq3.3} and \eqref{eq3.4} yield that
\begin{align}
    x_{j+1}=\hat{A}(q)x_j+\hat{B}(q)u_j, \ \ y_j=\hat{C}(q)x_{(j)}, \ \ j=0,1,2,...\label{eq3.5}
\end{align}
where now we let $x_0\in V$. Here, again by the properties of the analytic semigroup (see \cite{pazy83,TNB}), we have $\{e^{A(q)t}:t\geq0\}$, $x_j \in V$, $\hat{A}(q)=e^{A(q)\tau}\in\mathcal{L}(V,V)$ and $\hat{B}(q)=\int_0^{\tau}e^{A(q)s}B(q)ds\in\mathcal{L}(\mathbb{R}^{\mu},H)$. The operator $\hat{C}(q)$ appearing in \eqref{eq3.5} is defined by recalling \eqref{eq3.4}. We set 
\begin{align}
\hat{C}(q)x_{(j)} = C(q)x_{(j)},\label{eq3.6}
\end{align}
where $x_{(j)}$ in \eqref{eq3.6} denotes the function in $L_2(0,T,V)$ given by 
\begin{align}
x_{(j)}=\sum_{i=1}^{j}x_{i}\chi_{[(j-1)\tau,j\tau)}.\label{eq3.6a}
\end{align}

Now, in light of the coercivity assumption, Assumption (ii), by making the change of variables $z(t) = e^{-\lambda_0t}x(t)$ and $v(t) = e^{-\lambda_0t}u(t)$, without loss of generality we may assume that the operator $A(q)$ is invertible with bounded inverse. Thus we have that $\hat{B}(q)=\int_0^{\tau}e^{A(q)s}B(q)ds=A(q)^{-1}e^{A(q)s}B(q)\Big|_0^{\tau}=(\hat{A}(q)-I)A(q)^{-1}B(q) \in {\mathcal{L}}(\mathbb{R}^{\mu},V)$. It follows that the recurrence given in \eqref{eq3.5} is a recurrence in $V$ with $\hat{A}(q) \in {\mathcal{L}}(V,V)$ and $\hat{B}(q) \in {\mathcal{L}}(\mathbb{R}^{\mu},V)$. Thus it now becomes possible to allow the discrete time output operator $\hat{C}(q) \in {\mathcal{L}}(V,\mathbb{R}^{\nu})$ defined in \eqref{eq3.6} and \eqref{eq3.6a}, if so desired, to take on the much simpler form $\hat{C}(q)x = \left<c(q),x\right>_{V^*,V}$.  In what follows we shall assume that the output operator takes this simpler form.

\subsection{Systems with Boundary Input}\label{boundary.input}

Of primary interest to us here are systems of the form \eqref{eq3.1} or \eqref{eq3.2} where the input $u$ is on the boundary of the spatial domain.  The theory developed in \cite{CS} and \cite{PS} tells us how in this case to define the input operator $B(q)$ and the notion of a mild solution upon which our approach is based. Let $W$ be a Hilbert space which is densely and continuously embedded in $H$. Let $\Delta (q) \in {\mathcal{L}}(W,H)$ and $\Gamma (q) \in {\mathcal{L}}(W,\mathbb{R}^{\mu})$ and assume that $Dom(A(q)) \subseteq {\mathcal N}(\Gamma(q)) \subseteq W$, $\Gamma (q)$ is surjective and $\Delta (q) = A(q)$ on $Dom(A(q))$. We then consider the system with input on the boundary given by  
\begin{align}
    \begin{aligned}
    \dot{x}(t)&=\Delta (q)x(t), \ \ t>0,\\
    \Gamma(q)x(t)&=u(t), \ \ t>0,\\
    y(t)&=C(q)x_{(t)}, \ \ t>0,\\
    x(0)&=x_0.\label{eq3.6b}
    \end{aligned}
\end{align}
In \cite{CS}, Curtain and Salamon define a solution to the system \eqref{eq3.6b} for the case where $u \in C([0,T];\mathbb{R}^\mu)$ and $x_0 \in W$ with $\Gamma(q)x_0=u(0)$, to be a function $x \in C([0,T];W)\cap C^1([0,T];H)$ that satifies \eqref{eq3.6b} at every $t \in (0,T)$.  The operator $A(q)$ densely defined implies that it has an adjoint operator $A(q)^*:Dom(A(q)^*)\subseteq H\rightarrow H$ which is also densely defined and closed. Defining $Z^*$ to be the Hilbert space $Dom(A(q)^*)$ endowed with the graph Hilbert space norm associated with $A(q)^*$, $Z^*$ will be continuously and densely embedded in $H$. So, the Gelfand triple $Z^*\hookrightarrow H\hookrightarrow Z$ is obtained where $Z = Z^{**}$ represents the dual space of $Z^*$. By definition $A(q)^*\in {\mathcal L}(Z^*,H)$ and consequently therefore, $A(q) \in {\mathcal L}(H,Z)$. It follows that the semigroup $\{e^{A(q)t}:t\geq 0\}$ can be uniquely extended to a holomorphic semigroup on $Z$ with infinitesimal generator $A(q):H\subseteq Z\rightarrow Z$, the extension $A(q)$ to $H$ defined via the duality pairing $<A(q)\psi,\phi>_{Z,Z^*}=<\psi,A(q)^*\phi>_H$, for $\psi\in H$, and $\phi \in Z^*=Dom(A(q)^*)$.

For each $q\in Q$, let $\Gamma^+(q)\in\mathcal{L}(\mathbb{R}^{\mu},W)$ be any right inverse of $\Gamma(q)\in\mathcal{L}(W,\mathbb{R}^{\mu})$, and define the operator $B(q)\in\mathcal{L}(\mathbb{R}^{\mu},Z)$ by $B(q)=(\Delta(q)-A(q))\Gamma^+(q)$. It is not difficult to show that $B(q)$ is well defined (i.e. that it does not depend on the particular choice of the right inverse $\Gamma^+(q)$). Then for any $x_0 \in H$ and $u \in L_2([0,T];\mathbb{R}^\mu)$, the mild solution, $x \in C([0,T];Z)$, of the initial boundary value problem in \eqref{eq3.6b} is the $Z$-valued function given by
\begin{align}
        x(t)=e^{A(q)t}x_0+\int_0^t e^{A(q)(t-s)}B(q)u(s)ds, \ \ t\geq 0.\label{eq3.7}
\end{align}
It is shown in \cite{CS} that if \eqref{eq3.6b} has a solution, then it is given by \eqref{eq3.7} where  $x\in C([0,T],H)\cap H^1((0,T),Z)$ and moreover, we have that the estimate given by $|\int_0^t e^{A(q)(t-s)}B(q)u(s)ds|_{H} \leq k||u||_{L_2([0,T];\mathbb{R}^\mu)}$ holds.

We note that if in fact we have that $W \subset V$, which is often the case (for example, in a one dimensional diffusion equation with either Neumann or Robin boundary input (see our examples in Section \eqref{num.res} below), but may not be the case if, for example, the boundary input is Dirichlet), then in the above formulation we may take $Z^* = V$ and $Z = V^*$.  In this case it will follow that $B(q)=(\Delta(q)-A(q))\Gamma^+(q) \in \mathcal{L}(\mathbb{R}^{\mu},V^*)$ and consequently that the theory presented at the beginning of Section \eqref{abs.par}, and in particular, the discrete time theory presented in Section \eqref{discrete.time}, applies. For ease of exposition, we will assume that this is indeed the case for what follows below.  We note that all the results continue to follow in the more general case where $Z^* = Dom(A(q)^*)$.  It then follows that $\hat{A}(q)=e^{A(q)\tau} \in \mathcal{L}(V,V)$ and that $\hat{B}(q)=\int_0^{\tau}e^{A(q)s}dsB(q)\in\mathcal{L}(\mathbb{R}^{\mu},V)$ and therefore that
\begin{align}
\hat{B}(q)=\int_0^{\tau}e^{A(q)s}B(q)ds
=A(q)^{-1}e^{A(q)s}B(q)\Big|_0^{\tau}
=(\hat{A}(q)-I)A(q)^{-1}B(q),\nonumber
\end{align}
and $\hat{C}(q)=C(q)\in\mathcal{L}(V,\mathbb{R}^{\nu})$.  Note that now we have
\begin{align}
\hat{B}(q)=(I-\hat{A}(q))\Gamma^+(q)+\int_0^{\tau}e^{A(q)s}ds \Delta(q) \Gamma^+(q)\in\mathcal{L}(\mathbb{R}^{\mu},V),\label{eq3.10}
\end{align}
and if $\Gamma^+(q)$ can be chosen so that $R(\Gamma^+(q))\in \mathcal{N}(\Delta(q))$, then the expression in \eqref{eq3.10} becomes $\hat{B}(q)=(I-\hat{A}(q))\Gamma^+(q)$. Then, if $x_0=0\in H$, $y_i$ is given by
\begin{align}
    \begin{aligned}
        y_i&=\sum_{j=0}^{i-1}C(q)\hat{A}(q)^{i-j-1}\hat{B}(q)u_i\\
        &=\sum_{j=0}^{i-1}K_{i,j}u_j, \ \ i=1,2,... ,\label{eq3.11}
    \end{aligned}
\end{align}
where the operator $K_{i,j}=C(q)\hat{A}(q)^{i-j-1}(I-\hat{A}(q))\Gamma^+(q)$ appearing in \eqref{eq3.11} is the gain that represents the contribution of the $j^{th}$ input channel to the $i^{th}$ output channel.

\section{Random Regularly Dissipative Operators and Their Associated Semigroups}\label{rand.reg}

In this section, we summarize the key ideas from the framework developed in \cite{GAS} and \cite{SGJ} which are central to our approach. We assume that $\mathcal{q}$ is a $p$-dimensional random vector whose support is in $\prod_{i=1}^p[a_i,b_i]$ where $-\infty<\bar{\alpha}<a_i<b_i<\bar{\beta}<\infty$ for all $i=1,2,...,p$. Letting $\vec{a}=[a_i]_{i=1}^p$, $\vec{b}=[b_i]_{i=1}^p$ and let $\Theta\subset\mathbb{R}^r$ for some $r$ be closed and bounded. We assume that the distribution of $\mathcal{q}$ can be represented by an absolutely continuous cumulative distribution function $F(q;\vec{a},\vec{b},\vec{\theta})$, or equivalently, by a (push forward) measure  $\pi=\pi(\vec{a},\vec{b},\vec{\theta})$, where $\vec{\theta}\in\Theta$. Let $a(\cdot;\cdot,\cdot)$ be a sesquilinear form satisfying (i)-(iii) given in Section \eqref{abs.par}, where the assumed measurability is with respect to all of the measures $\pi=\pi(\vec{a},\vec{b},\vec{\theta})$. 

Define the Bochner spaces $\mathcal{V} = L_{\pi}^2(Q;V)$ and $\mathcal{H}=L_{\pi}^2(Q;H)$.  The assumptions from Section \eqref{abs.par} on the spaces $V$ and $H$ guarantee that the spaces $\mathcal{V}$, $\mathcal{H}$ and $\mathcal{V}^{*}$ form the Gelfand triple $\mathcal{V}\hookrightarrow\mathcal{H}\hookrightarrow\mathcal{V}^*$ (see \cite{GAS}) where $\mathcal{H}$ is identified with its dual $\mathcal{H}^*$ and $\mathcal{V}^*$ is identified with $L_{\pi}^2(Q;V^*)$.

For $\vec{a}=[a_i]_{i=1}^p$, $\vec{b}=[b_i]_{i=1}^p$ satisfying $-\infty<\bar{\alpha}<a_i<b_i<\bar{\beta}<\infty$ for all $i=1,2,...,p$, and $\vec{\theta}\in\Theta$, set $\rho = (\vec{a},\vec{b},\vec{\theta})$.  Then we define the $\pi(\rho)$-averaged sesquilinear forms $\mathcal{a}(\rho;\cdot,\cdot):\mathcal{V} \times \mathcal{V}\rightarrow\mathbb{C}$ (note, the spaces $\mathcal{H}$, $\mathcal{V}$, and $\mathcal{V^*}$ now of course depend on $\rho$, but our notation here we will not explicitly show this dependence unless clarity demands it) by
\begin{align}
    \mathcal{a}(\rho;\varphi,\psi)=\int_Q a(q;\varphi(q),\psi(q))d\pi(q;\rho)=\mathbb{E}[a(\mathcal{q};\varphi(\mathcal{q}),\psi(\mathcal{q}))||\pi(\rho)],\label{eq4.1}
\end{align}
where $\varphi,\psi\in\mathcal{V}$ and $\rho = (\vec{a},\vec{b},\vec{\theta})$. It is not difficult to show that Assumptions (i)-(iii) imply that $\mathcal{a}(\rho;\cdot,\cdot)$ is a bounded and coercive sesquilinear form on $\mathcal{V}\times\mathcal{V}$. Consequently, this sesquilinear form defines a bounded linear map $\mathcal{A}(\rho):\mathcal{V}\rightarrow\mathcal{V}^*$ by $<\mathcal{A}(\rho)\varphi,\psi>_{\mathcal{V}^*,\mathcal{V}}=-\mathcal{a}(\rho;\varphi,\psi)$ which when appropriately restricted or extended is the infinitesimal generator of analytic semigroups of bounded linear operators $\{e^{\mathcal{A}(\rho)t}:t\geq0\}$ on  $\mathcal{V}$, $\mathcal{H}$ and $\mathcal{V}^*$ (see \cite{BI,BK,TNB}). 
We assume that the maps $q\mapsto<b(q),\psi(q)>_{V^*,V}$ and $q\mapsto<c(q),\psi(q)>_{V^*,V}$ are $\pi(\rho)$-measurable for any $\psi\in\mathcal{V}$, and that $||b(q)||_{V^*}$, $||c(q)||_{V^*}$ are uniformly bounded for a.e. $q\in Q$. We then define $\mathcal{B}(\rho):\mathbb{R^{\mu}}\rightarrow\mathcal{V}^*$ and $\mathcal{C}(\rho):\mathcal{V}\rightarrow\mathbb{R^{\nu}}$ by
\begin{align}
    <\mathcal{B}(\rho)u,\psi>_{\mathcal{V}^*,\mathcal{V}}&=\int_Q\left<b(q),\psi(q)\right>_{V^*,V}d\pi(q;\rho)u\nonumber\\
    &=\mathbb{E}[\left<b(\mathcal{q}),\psi(\mathcal{q})\right>_{V^*,V}||\pi(\rho)]u,\label{eq4.2}
\end{align}
\begin{align}
    \mathcal{C}(\rho)\psi=\int_Q\left<c(q),\psi\right>_{V^*,V}d\pi(q;\rho)=\mathbb{E}[\left<c(\mathcal{q}),\psi(\mathcal{q})\right>_{V^*,V}||\pi(\rho)],\label{eq4.3}
\end{align}
for $u\in\mathbb{R}^{\mu}$ and $\psi\in\mathcal{V}$.

With the definitions \eqref{eq4.1} - \eqref{eq4.3} of the operators $\mathcal{A}$, $\mathcal{B}$, and $\mathcal{C}$, consider the abstract evolution system given by 
\begin{align}
    \begin{aligned}
        \dot{\mathcal{x}}(t)&=\mathcal{A}(\rho)\mathcal{x}(t)+\mathcal{B}(\rho)u(t),\\
        \mathcal{x}(0)&=\mathcal{x}_0 \in H,\\
        \mathcal{y}(t)&=\mathcal{C}(\rho)\mathcal{x}(t), \ t>0,\label{eq4.4}
    \end{aligned}
\end{align}
whose mild solution is given by
\begin{align}
    \mathcal{x}(t)=\mathcal{T}(t;\rho)\mathcal{x}_0+\int_0^t\mathcal{T}(t-s;\rho)\mathcal{B}(\rho)u(s)ds, \ t\geq0,\label{eq4.5}
\end{align}
where $\mathcal{T}(t;\rho)=\{e^{\mathcal{A}(\rho)t}:t\geq0\}$ is the analytic semigroup generated by the operator $\mathcal{A}(\rho)$. From \eqref{eq4.4} and \eqref{eq4.5}, it follows that
\begin{align}
    \mathcal{y}(t)=\int_0^t\mathcal{C}(\rho)\mathcal{T}(t-s;\rho)\mathcal{B}(\rho)u(s)ds, \ t\geq0.\label{eq4.6}
\end{align}
As in Section \eqref{abs.par}, we obtain a discrete or sampled time version of \eqref{eq4.4}. Now let $x_0\in V$, let $\tau>0$ be the sampling time, and consider zero-order hold inputs of the form $u(t)=u_j$, $t\in[j\tau,(j+1)\tau)$, $j=0,1,2,...$. Setting $\mathcal{x}_j=\mathcal{x}(j\tau)$ and $\mathcal{y}_j=\mathcal{y}(j\tau)$, $j=0,1,2,...$, \eqref{eq4.5} and \eqref{eq4.6} yield
\begin{align}
    \mathcal{x}_{j+1}=\hat{\mathcal{A}}(\rho)\mathcal{x}_j+\hat{\mathcal{B}}(\rho)u_j, \ \ \mathcal{y}_j=\hat{\mathcal{C}}(\rho)\mathcal{x}_j, \ \ j=0,1,2,...,\label{eq4.7}
\end{align}
with $\mathcal{x}_0\in\mathcal{V}$ and $\hat{\mathcal{A}}(\rho)=\mathcal{T}(\tau;\rho)\in\mathcal{L}(\mathcal{V},\mathcal{V})$, $\hat{\mathcal{B}}(\rho)=\int_0^{\tau}\mathcal{T}(s;\rho)\mathcal{B}(\rho)ds\in\mathcal{L}(\mathbb{R}^{\mu},\mathcal{V})$, and $\hat{\mathcal{C}}(\rho)=\mathcal{C}(\rho)\in\mathcal{L}(\mathcal{V},\mathbb{R}^{\nu})$. Note that the operators $\hat{\mathcal{A}}(\rho)$ and $\hat{\mathcal{B}}(\rho)$ are bounded since $\{\mathcal{T}(t;\rho):t\geq0\}$ is an analytic semigroup on $\mathcal{V}$, $\mathcal{H}$, and $\mathcal{V}^*$ (see \cite{BI,BK,LJL,TNB}). If $\mathcal{A}(\rho):Dom(\mathcal{A}(\rho))\subseteq\mathcal{V}^*\rightarrow\mathcal{V}^*$ has bounded inverse, then $\hat{\mathcal{B}}(\rho)=\int_0^{\tau}\mathcal{T}(s;\rho)\mathcal{B}(\rho)ds=\hat{\mathcal{A}}(\rho)^{-1}\mathcal{T}(s;\rho)\mathcal{B}(\rho)\big|_0^{\tau}=(\hat{\mathcal{A}}(\rho)-I)\mathcal{A}(\rho)^{-1}\mathcal{B}(\rho)\in\mathcal{L}(\mathbb{R}^\mu,\mathcal{V})$.

It is shown in \cite{GAS} and \cite{SGJ} that the solutions of systems \eqref{eq4.4} and \eqref{eq3.2} and \eqref{eq4.7} and \eqref{eq3.5} agree for $\pi$-a.e. $q\in Q$. It follows that
\begin{align}
    \mathcal{y}(t)=\mathcal{C}(\rho)\mathcal{x}(t)=\mathbb{E}[y(t;\mathcal{q})||\pi(\rho)]=\mathbb{E}[C(\mathcal{q})x(t;\mathcal{q})||\pi(\rho)], \ \ \forall t\geq0,\label{eq4.8}
\end{align}
and hence, from \eqref{eq4.8}, that
\begin{align}
    \mathcal{y}_j=\hat{\mathcal{C}}(\rho)\mathcal{x}_j=\mathbb{E}[\mathcal{y}_j(\mathcal{q})||\pi(\rho)]=\mathbb{E}[\hat{C}(\mathcal{q})x_j(\mathcal{q})||\pi(\rho)],\label{eq4.9}
\end{align}
where in \eqref{eq4.8} and \eqref{eq4.9} $\mathbb{E}[\cdot||\pi]$ denotes expectation with respect to the measure $\pi$.

\section{Approximation and Convergence}\label{appr.conv}

In this section, we can now formally state our estimation problem and the sequence of finite dimensional approximating problems. We will also state and prove a convergence theorem.

\subsection{The Estimation Problem}\label{est.prob}

Assume that data of the form $(\{\tilde{u}_{i,j}\}_{j=0}^{n_i-1},\{\tilde{y}_{i,j}\}_{j=0}^{n_i})_{i=1}^m$, has been given. Determine 
$\rho^{*} = (\vec{a}^{*},\vec{b}^{*},\vec{\theta}^{*}) \in \Xi$, $\Xi$ a compact subset of $\mathbb{R}^{2p} \times \Theta \subset \mathbb{R}^{2p + r}$, $\vec{a}^{*}=[a_{i}^{*}]_{i=1}^{p}$, $\vec{b}^{*}=[b_{i}^{*}]_{i=1}^{p}$, which minimizes
\begin{align}
J(\rho)=\sum_{i=1}^mJ_i(\rho)=\sum_{i=1}^m\sum_{j=0}^{n_i}|\mathcal{y}_{i,j}(\{\tilde{u}_{i,k}\}_{k=0}^{n_i-1},\rho)-\tilde{y}_{i,j}|^2\label{eq4.10}
\end{align}
where for $i=1,2,...,m$, $\mathcal{y}_{i,j}(\{\tilde{u}_{i,k}\}_{k=0}^{n_i-1},\rho)$ is given by \eqref{eq4.7} with $u_j=\tilde{u}_{i,j}$, $j=0,...,n_i$, $i=1,2,...,m$, and \eqref{eq4.9}.                    

Recalling the assumption that for $i \in \{1,2,...,p\}$, $-\infty<\bar{\alpha}\leq a_i < b_i \leq \bar{\beta}<\infty$, let $\bar{Q}=\prod_{i=1}^p[\bar{\alpha},\bar{\beta}]$. Let $\bar{\rho} = ([\bar{\alpha}]_{i=1}^{p},[\bar{\beta}]_{i=1}^{p},\vec{\theta}) \in \Xi$,
$\bar{\mathcal{H}}=L_{\pi(\bar{\rho})}^2(\bar{Q};H)$ and $\bar{\mathcal{V}}=L_{\pi(\bar{\rho})}^2(\bar{Q};V)$. Then, for $N=1,2,...$, let $\vec{a}^N=[a_i^N]_{i=1}^p$, $\vec{b}^N=[b_i^N]_{i=1}^p$ be such that $-\infty<\bar{\alpha} \leq a_i^N<b_i^N \leq \bar{\beta}<\infty$, and let $\rho^N = ([\vec{a}^N,\vec{b}^N,\vec{\theta}) \in \Xi$.  Set $Q^N=\prod_{i=1}^p[a_i^N,b_i^N]$, $\mathcal{H}^N=L_{\pi(\bar{\rho}^N)}^2(Q^N,H)$, $\mathcal{V}^N=L_{\pi(\bar{\rho}^N)}^2(Q^N,V)$ and let $\mathcal{U}^N$ be a finite dimensional subspace of $\mathcal{V}^N$.  Let $\mathcal{I}^N:\bar{\mathcal{H}}\rightarrow\mathcal{H}^N$ be a linear map defined by $\mathcal{I}^N(\psi)=\psi|_{Q^N}$ for any $\psi\in\bar{\mathcal{H}}$, let $\mathcal{P}^N:\mathcal{H}^N\rightarrow\mathcal{U}^N$ denote the orthogonal projection of $\mathcal{H}^N$ onto $\mathcal{U}^N$, and define $\mathcal{J}^N:\bar{\mathcal{H}}\rightarrow\mathcal{U}^N$ by $\mathcal{J}^N =\mathcal{P}^N\circ\mathcal{I}^N$.  

In addition, recall that we have assumed that for $\rho \in \Xi$, the probability distributions described by $\pi(\rho)$ are all absolutely continuous; that is $\pi(\rho)\sim f(\rho)$, where $f(\rho)=f(\cdot;\rho)$ is a joint density for the random vector $\mathcal{q}$. 

Noting that in this formulation, $\mathcal{U}^N$ is neither a subspace of $\bar{\mathcal{H}}$ nor $\bar{\mathcal{V}}$, we define the operators $\mathcal{A}^N(\rho)$ on $\mathcal{U}^N$ to be what are essentially the restrictions of $\mathcal{A}(\rho)$ to the spaces $\mathcal{U}^N$.  More precisely, we set
\begin{align}
        \left<\mathcal{A}^N(\rho)v^N,w^N\right>=-\mathcal{a}(\rho;v^N,w^N)=-\int_{Q}a(q;v^N(q),w^N(q))d\pi(q,\rho)\nonumber\\
        =-\int_{Q}a(q;v^N(q),w^N(q))f(q;\rho)dq=-\mathbb{E}[a(\mathcal{q};v^N(\mathcal{q}),w^N(\mathcal{q}))||\pi(\rho)],\label{eq5.1}
\end{align}
where $v^N,w^N\in\mathcal{U}^N$.

Define the operators $\mathcal{B}^N(\rho):\mathbb{R}^\mu\rightarrow\mathcal{U}^N$ and $\mathcal{C}^N(\rho):\mathcal{U}^N\rightarrow\mathbb{R}^\nu$ by
\begin{align}
    <\mathcal{B}^N(\rho)u,v^N>_{\mathcal{V}^*,\mathcal{V}}&=\int_Q\left<b(q),v^N(q)\right>_{V^*,V}d\pi(q;\rho)u\nonumber\\
    &=\mathbb{E}[\left<b(\mathcal{q}),v^N(\mathcal{q})\right>_{V^*,V}||\pi(\rho)]u,\label{eq5.2}
\end{align}
\begin{align}
    \mathcal{C}^N(\rho)v^N=\int_Q\left<c(q),v^N\right>_{V^*,V}d\pi(q;\rho)=\mathbb{E}[\left<c(\mathcal{q}),v^N(\mathcal{q})\right>_{V^*,V}||\pi(\rho)],\label{eq5.3}
\end{align}
where $v^N\in\mathcal{U}^N$, and $u\in\mathbb{R}^\mu$.

With these definitions, we can now state the finite dimensional approximating problems.

Assume that data of the form $(\{\tilde{u}_{i,j}\}_{j=0}^{n_i-1},\{\tilde{y}_{i,j}\}_{j=0}^{n_i})_{i=1}^m$, has been given. Determine 
$\rho^{N*} = (\vec{a}^{N*},\vec{b}^{N*},\vec{\theta}^{N*}) \in \Xi$, $\Xi$ a compact subset of $\mathbb{R}^{2p} \times \Theta \subset \mathbb{R}^{2p + r}$, $\vec{a}^{N*}=[a_{i}^{N*}]_{i=1}^{p}$, $\vec{b}^{N*}=[b_{i}^{N*}]_{i=1}^{p}$, which minimizes
\begin{align}
J^N(\rho)=\sum_{i=1}^m\sum_{j=0}^{n_i}|\mathcal{y}_{i,j}^N(\{\tilde{u}_{i,k}\}_{k=0}^{n_i-1},\rho)-\tilde{y}_{i,j}|^2, \label{eq5.4}
\end{align}
where in \eqref{eq5.4}, for $i=1,2,...,m$, $\mathcal{y}_{i,j}^N(\{\tilde{u}_{i,k}\}_{k=0}^{n_i-1},\rho)=\hat{\mathcal{C}}(\rho)^N\mathcal{x}_{i,j}^N$ is given by \eqref{eq4.7} and \eqref{eq4.9} with $u_j=\tilde{u}_{i,j}$, $j=0,...,n_i$, $i=1,2,...,m$, $\mathcal{x}_{j}$ replaced by $\mathcal{x}_{i,j}^N\in\mathcal{U}^N$, $\hat{\mathcal{A}}(\rho)$ replaced by $$\hat{\mathcal{A}}^N(\rho)=\mathcal{T}^N(\tau;\rho) = e^{\mathcal{A}^N(\rho)\tau}\in\mathcal{L}(\mathcal{U}^N,\mathcal{U}^N),$$ $\hat{\mathcal{B}}(\rho)$ replaced by $\hat{\mathcal{B}^N}(\rho)=\int_0^{\tau}e^{\mathcal{A}^N(\rho)s}\mathcal{B}^N(\rho)ds\in\mathcal{L}(\mathbb{R}^\mu,\mathcal{U}^N)$, $\hat{\mathcal{C}}(\rho)$ replaced by $\hat{\mathcal{C}}^N(\rho)\in\mathcal{L}(\mathcal{U}^N,\mathbb{R}^\nu)$, and $\mathcal{x}_{i,0}$ replaced by $\mathcal{x}_{i,0}^N = \mathcal{J}^N\mathcal{x}_{i,0} \in \mathcal{U}^N$.  It follows that for $i=1,2,...,m$,
\begin{align}
    \mathcal{x}_{i,j+1}^N=\hat{\mathcal{A}}^N(\rho)\mathcal{x}_{i,j}^N+\hat{\mathcal{B}}^N(\rho)\tilde{u}_{i,j}, \ 
    \mathcal{y}_{i,j}^N = \mathcal{C}^N(\rho) \mathcal{x}_{i,j}^N \ \ j=0,1,2,...,\label{eq5.5}
\end{align}
with the operators $\mathcal{A}^N(\rho)$, $\mathcal{B}^N(\rho)$, and $\mathcal{C}^N(\rho)$ appearing in \eqref{eq5.5} are as they have been defined above using \eqref{eq5.1}-\eqref{eq5.3}.

In the following sections we prove that there exists a subsequence of solutions to the sequence of approximating problems that converges to the solution of our original estimation/optimization problem.

\subsection{A Version of the Trotter-Kato Semigroup Approximation Theorem}\label{trotterkato}

Our convergence proof is based on a version of the Trotter-Kato semigroup approximation theorem (\cite{BK,K,pazy83}) that does not require the approximating spaces to be subspaces of the underlying infinite dimensional state space. Banks, Burns and Cliff \cite{BBC} proved just such a result but unfortunately they do not state their hypotheses in terms of resolvent convergence which is what we require here. Consequently we establish the result in its requisite form here. 

Let $\hat{H}$ be a Hilbert space with norm $|\cdot|$ and let $\{\hat{H}^N\}$ be a sequence of Hilbert spaces, each equipped with norm $|\cdot|_N$. Assume that for each $N\in\mathbb{N}$, $\hat{U}^N$ is a closed (finite dimensional) subspace of $\hat{H}^N$.  Assume that the operators $\hat{A}$ on $\hat{H}$, and for each $N\in\mathbb{N}$, $\hat{A}^N$ on $\hat{U}^N$, are in $G(M,\lambda_0)$ with $M$ and $\lambda_0$ independent of $N$; that is they are the infinitesimal generators of $C_0$-semigroups $\hat{S}(t)$ on $\hat{H}$ and $\hat{S}^N(t)$, on $\hat{U}^N$, respectively, that are uniformly (uniformly in $N$) exponentially bounded. (We note that if $\hat{A}$ is obtained from a bounded and coercive sesquilinear form and the $\hat{U}^N$'s are subspaces with $\hat{A}^N$ defined as the restrictions of $\hat{A}$ to $\hat{U}^N$, then this latter assumption is easily verified \cite{BI,BK}.)

\begin{theorem}\label{th5.1}
Let $\hat{H}$, $\hat{H}^N$, and $\hat{U}^N$ be Hilbert spaces as defined above. Let $\mathcal{I}^N:\hat{H}\rightarrow\hat{H}^N$ be an operator such that $Im(\mathcal{I}^N)=\hat{H}^N$ and $|\mathcal{I}^Nz|_N\leq|z|$. Let $\mathcal{p}^N:\hat{H}^N\rightarrow\hat{U}^N$ be the canonical projection of $\hat{H}^N$ onto $\hat{U}^N$ and define $P^N:=\mathcal{p}^N\circ\mathcal{I}^N$. Let $\hat{A}\in G(M,\lambda_0)$ on $\hat{H}$, and $\hat{A}^N\in G(M,\lambda_0)$ on $\hat{U}^N$. Suppose that for some $\lambda\geq\lambda_0$,
\begin{align}
    |P^NR_{\lambda}(\hat{A})z-R_{\lambda}(\hat{A}^N)P^Nz|_N\rightarrow0, \ \ as \ \ N\rightarrow\infty, \label{eq5.6}
\end{align}
for every $z\in\hat{H}$, where $R_{\lambda}(\hat{A})=(\lambda I-\hat{A})^{-1}$ and $R_{\lambda}(\hat{A}^N)=(\lambda I-\hat{A}^N)^{-1}$ denote respectively the resolvent operators of $\hat{A}$ and $\hat{A}^N$ at $\lambda$. Then
\begin{align}
    |P^N\hat{S}(t)z-\hat{S}^N(t)P^Nz|_N\rightarrow0, \ \ as \ \ N\rightarrow\infty, \label{eq5.7}
\end{align}
in $\hat{H}^N$, for every $z\in\hat{H}$ uniformly in $t$ on compact $t$-intervals.
\end{theorem}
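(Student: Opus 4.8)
The plan is to mimic the classical Trotter--Kato argument (see \cite{pazy83,BK,K}), the one structural modification being that, since $\hat{U}^N$ is \emph{not} a subspace of $\hat{H}$, every comparison must be made in $\hat{H}^N$ after applying $P^N$, so the $P^N$'s have to be threaded carefully through a Duhamel (telescoping) identity. Throughout I would write $R_\lambda := R_\lambda(\hat{A})$ and $R_\lambda^N := R_\lambda(\hat{A}^N)$ for the fixed $\lambda>\lambda_0$ of \eqref{eq5.6}. First I would record the bounds that make every limit uniform: $|P^N w| \le |w|$ (from $|\mathcal{p}^N|\le 1$ and $|\mathcal{I}^N z|_N\le|z|$); $|\hat{S}(t)|\le M e^{\lambda_0 t}$ and $|\hat{S}^N(t)|_N\le M e^{\lambda_0 t}$, with $M,\lambda_0$ independent of $N$ because $\hat{A},\hat{A}^N\in G(M,\lambda_0)$; and, by Hille--Yosida, $|R_\lambda^N|_N \le M(\lambda-\lambda_0)^{-1}$ and $|R_\lambda|\le M(\lambda-\lambda_0)^{-1}$. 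I also note the elementary fact that a uniformly bounded sequence of operators converging to $0$ pointwise does so uniformly on compact subsets of its domain; I will apply this to $P^N R_\lambda - R_\lambda^N P^N \to 0$ over compact sets of the form $\{\hat{S}(s)z:s\in[0,T]\}$. No extension of \eqref{eq5.6} to other points of the resolvent set will be needed.

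Next I would set up the Duhamel identity. Fix $z\in\hat{H}$ and $t>0$, and for $s\in[0,t]$ consider the $\hat{U}^N$-valued map $\Phi^N(s)=\hat{S}^N(t-s)R_\lambda^N P^N\hat{S}(s)R_\lambda z$. Since $R_\lambda z\in Dom(\hat{A})$ and $\hat{A}^N$ is bounded on the finite-dimensional space $\hat{U}^N$, $\Phi^N$ is differentiable; differentiating, substituting $\hat{A}R_\lambda z=\lambda R_\lambda z-z$ and $\hat{A}^N R_\lambda^N=\lambda R_\lambda^N-I$, using that a resolvent commutes with its own semigroup (and $\hat{S}(s)R_\lambda=R_\lambda\hat{S}(s)$), the $\lambda$-terms cancel and one is left with $\tfrac{d}{ds}\Phi^N(s)=\hat{S}^N(t-s)\big(P^N R_\lambda-R_\lambda^N P^N\big)\hat{S}(s)z$. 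Integrating over $[0,t]$ and evaluating the endpoints gives
\begin{align}
R_\lambda^N P^N\hat{S}(t)R_\lambda z-\hat{S}^N(t)R_\lambda^N P^N R_\lambda z=\int_0^t\hat{S}^N(t-s)\big(P^N R_\lambda-R_\lambda^N P^N\big)\hat{S}(s)z\,ds. \tag{$\star$}
\end{align}
The right-hand side of $(\star)$ tends to $0$ as $N\to\infty$, uniformly for $t\in[0,T]$: its integrand is dominated, uniformly in $s\in[0,T]$ and $N$, by $M e^{|\lambda_0|T}\cdot 2M(\lambda-\lambda_0)^{-1}\cdot M e^{|\lambda_0|T}|z|$, and tends to $0$ pointwise in $s$ by \eqref{eq5.6} applied to the vector $\hat{S}(s)z$, so dominated convergence finishes the estimate (uniformity in $t$ because $\int_0^t$ is controlled by $\int_0^T$).

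Then I would rewrite the left-hand side of $(\star)$. Using $\hat{S}(t)R_\lambda=R_\lambda\hat{S}(t)$, the uniform bound on $R_\lambda^N$, and \eqref{eq5.6} applied successively to the vectors $\hat{S}(t)z$, $R_\lambda\hat{S}(t)z$ and $R_\lambda z$ (each replacement of $R_\lambda^N P^N$ by $P^N R_\lambda$ costs only a uniformly bounded operator times an $o(1)$ term, or a fresh instance of \eqref{eq5.6}), one finds $R_\lambda^N P^N\hat{S}(t)R_\lambda z=P^N\hat{S}(t)R_\lambda^2 z+o(1)$ and $\hat{S}^N(t)R_\lambda^N P^N R_\lambda z=\hat{S}^N(t)P^N R_\lambda^2 z+o(1)$, both uniformly on $[0,T]$. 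Substituting into $(\star)$ and invoking the previous paragraph yields $P^N\hat{S}(t)y-\hat{S}^N(t)P^N y\to 0$ uniformly on $[0,T]$ for every $y$ of the form $R_\lambda(\hat{A})^2 z$, $z\in\hat{H}$; since $R_\lambda(\hat{A})^2\hat{H}=Dom(\hat{A}^2)$, this is \eqref{eq5.7} for all $y\in Dom(\hat{A}^2)$. Finally, $Dom(\hat{A}^2)$ is dense in $\hat{H}$, so for arbitrary $y\in\hat{H}$ and $\varepsilon>0$ I pick $y_\varepsilon\in Dom(\hat{A}^2)$ with $|y-y_\varepsilon|<\varepsilon$ and use
\begin{align}
|P^N\hat{S}(t)y-\hat{S}^N(t)P^N y|_N\le M e^{|\lambda_0|T}\varepsilon+\big|P^N\hat{S}(t)y_\varepsilon-\hat{S}^N(t)P^N y_\varepsilon\big|_N+M e^{|\lambda_0|T}\varepsilon; \nonumber
\end{align}
letting $N\to\infty$ and then $\varepsilon\to 0$ gives \eqref{eq5.7} uniformly on $[0,T]$, completing the proof.

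The main obstacle — and the reason the argument is built around $(\star)$ rather than a naive comparison of $\hat{S}^N(t)P^N z$ with $\hat{S}(t)z$ — is precisely that $\hat{U}^N$ is not a subspace of $\hat{H}$, so $\hat{A}^N$ never acts directly on a projection of a generic element of $Dom(\hat{A})$ and only the resolvent form \eqref{eq5.6} of ``consistency'' is available; extracting semigroup convergence from it forces the extra $R_\lambda$-smoothing that confines the core estimate to $Dom(\hat{A}^2)$ (harmless, since that space is dense). The remaining work is bookkeeping: keeping each $P^N$ on the correct side, and checking that every error term produced in the rewriting of the left-hand side of $(\star)$ is a uniformly bounded operator applied to an $o(1)$ quantity, so that all the convergences are genuinely uniform for $t$ in compact intervals.
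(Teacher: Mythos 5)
Your proof is correct and follows essentially the same route as the paper's: the same Duhamel/telescoping identity $(\star)$ (the paper's \eqref{eq5.9}--\eqref{eq5.11}), the same dominated-convergence estimate on its right-hand side, the same use of \eqref{eq5.6} to commute $R_\lambda^N P^N$ into $P^N R_\lambda$ on the left, and the same final density argument via $Dom(\hat{A}^2)=R_\lambda(\hat{A})^2\hat{H}$. Your explicit remark that pointwise convergence of the uniformly bounded operators $P^N R_\lambda - R_\lambda^N P^N$ is uniform on the compact orbit $\{\hat{S}(s)z\}$ makes the uniformity-in-$t$ claims slightly more transparent than the paper's \eqref{eq5.14}--\eqref{eq5.17}, but the argument is the same.
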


\begin{proof}
For ease of exposition and without loss of generality, let $\lambda_0=0$. Then, since $\hat{S}(t)R_{\lambda}(\hat{A})$ and $\hat{S}^N(t)R_{\lambda}(\hat{A}^N)$ are both strongly differentiable in $t$, we have
\begin{align}
    \frac{d}{dt}\hat{S}(t)R_{\lambda}(\hat{A})=\hat{A}\hat{S}(t)R_{\lambda}(\hat{A})=\hat{S}(t)\hat{A}R_{\lambda}(\hat{A})=\hat{S}(t)[\lambda R_{\lambda}(\hat{A})-I]. \label{eq5.8}
\end{align}
Then, using an identity for $\hat{S}^N(t)R_{\lambda}(\hat{A}^N)$ analogous to \eqref{eq5.8} , we obtain
\begin{align}
    \begin{aligned}
    \frac{d}{ds}[\hat{S}^N(t-s)R_{\lambda}(\hat{A}^N)P^N&\hat{S}(s)R_{\lambda}(\hat{A})]\\ &=\hat{S}^N(t-s)[P^NR_{\lambda}(\hat{A})-R_{\lambda}(\hat{A}^N)P^N]\hat{S}(s).\label{eq5.9}
    \end{aligned}
\end{align}
Then, since 
\begin{align}
    \begin{aligned}
    \hat{S}^N(t-s)R_{\lambda}(\hat{A}^N)P^N&\hat{S}(s)R_{\lambda}(\hat{A})|_{s=0}^{s=t}\\
    &=R_{\lambda}(\hat{A}^N)[P^N\hat{S}(t)-\hat{S}^N(t)P^N]R_{\lambda}(\hat{A}),\label{eq5.10}
    \end{aligned}
\end{align}
\eqref{eq5.9} and \eqref{eq5.10} imply that
\begin{align}
    \begin{aligned}
    R_{\lambda}(\hat{A}^N)[P^N\hat{S}(t)-&\hat{S}^N(t)P^N]R_{\lambda}(\hat{A})\\
    &=\int_0^t\hat{S}^N(t-s)[P^NR_{\lambda}(\hat{A})-R_{\lambda}(\hat{A}^N)P^N]\hat{S}(s)ds.\label{eq5.11}
    \end{aligned}
\end{align}
Equation \eqref{eq5.11} and $|\hat{S}^N(t-s)|\leq M$ (recall $\lambda_0=0$), for any $u\in\hat{H}$, yield
\begin{align}
    \begin{aligned}
    |R_{\lambda}(\hat{A}^N)[P^N\hat{S}(t)-&\hat{S}^N(t)P^N]R_{\lambda}(\hat{A})u|_N\\
    &\leq M\int_0^t|[P^NR_{\lambda}(\hat{A})-R_{\lambda}(\hat{A}^N)P^N]\hat{S}(s)u|_Nds.\label{eq5.12}
    \end{aligned}
\end{align}
By \eqref{eq5.6}, we know that the integrand in \eqref{eq5.12} converges to 0 for a fixed $s$, and also it is bounded by $2M^2|u|/\lambda$, and therefore, by the Lebesque Dominated Convergence Theorem, the right-hand side of \eqref{eq5.12} converges to 0 as $N\rightarrow\infty$, where the convergence is uniform in $t$ on compact $t$-intervals.

Letting $v=R_{\lambda}(\hat{A})u$, and using the fact that $D(\hat{A})$ is dense in $\hat{H}$, we have that
\begin{align}
    |R_{\lambda}(\hat{A}^N)[P^N\hat{S}(t)-\hat{S}^N(t)P^N]v|_N\rightarrow0, \ \ as \ \ N\rightarrow\infty, \label{eq5.13}
\end{align}
for all $v\in\hat{H}$. Then, since $|\hat{S}(t)|\leq M$, \eqref{eq5.6} implies that
\begin{align}
    \begin{aligned}
    |R_{\lambda}(\hat{A}^N)\hat{S}^N(t)P^Nv-&\hat{S}^N(t)P^NR_{\lambda}(\hat{A})v|_N\\
    &=|\hat{S}^N(t)[R_{\lambda}(\hat{A}^N)P^Nv-P^NR_{\lambda}(\hat{A})v]|_N\rightarrow0,\label{eq5.14}
    \end{aligned}
\end{align}
and similarly, $|\hat{S}^N(t)|\leq M$ and (\eqref{eq5.6}) imply that
\begin{align}
\begin{aligned}
    |R_{\lambda}(\hat{A}^N)P^N\hat{S}(t)v-&P^N\hat{S}(t)R_{\lambda}(\hat{A})v|_N\\
    &=|[R_{\lambda}(\hat{A}^N)P^N-P^NR_{\lambda}(\hat{A})]\hat{S}(t)v|_N\rightarrow0. \label{eq5.15}
    \end{aligned}
\end{align}
Combining \eqref{eq5.14}, \eqref{eq5.15}, and the triangle inequality we get
\begin{align}
    |R_{\lambda}(\hat{A}^N)[\hat{S}^N(t)P^N-P^N\hat{S}(t)]v+[P^N\hat{S}(t)-\hat{S}^N(t)P^N]R_{\lambda}(\hat{A})v|_N\rightarrow0, \label{eq5.16}
\end{align}
as $N\rightarrow\infty$. Then, because of \eqref{eq5.13}, and again by the triangle inequality, we obtain that
\begin{align}
|[P^N\hat{S}(t)-\hat{S}^N(t)P^N]R_{\lambda}(\hat{A})v|_N\rightarrow0, \ \ as \ \ N\rightarrow\infty. \label{eq5.17}
\end{align}
Letting $w=R_{\lambda}(\hat{A})v$, we have $w\in Dom(\hat{A}^2)$; and since $Dom(\hat{A}^2)$ is dense in $\hat{H}$, it follows from \eqref{eq5.6}, \eqref{eq5.16} and \eqref{eq5.17} that
\begin{align}
    |\hat{S}^N(t)P^Nz-P^N\hat{S}(t)z|_N\rightarrow0, \ \ as \ \ N\rightarrow\infty, \nonumber 
\end{align}
for all $z\in\hat{H}$ uniformly in $t$ on compact $t$-intervals.
\end{proof}

\subsection{Application to the Density Estimation Problem}\label{app.prob.est}

Let $\{\rho^N\}, \rho \in \Xi$ be such that $f^N(q)\rightarrow f(q)$, for almost every $q\in Q$, where $f^N(q) = f(q;\rho^N)$ and $f(q) = f(q;\rho)$. Let $\bar{\mathcal{H}}$, $\bar{\mathcal{V}}$, $\mathcal{H}^N$, $\mathcal{V}^N$, $\mathcal{U}^N$, $\mathcal{I}^N:\bar{\mathcal{H}}\rightarrow\mathcal{H}^N$, $\mathcal{P}^N:\mathcal{H}^N\rightarrow\mathcal{U}^N$, and  $\mathcal{J}^N:\bar{\mathcal{H}}\rightarrow\mathcal{U}^N$ be as they were defined earlier. Set $\mathcal{A} = \mathcal{A}(\rho)$ and consider it to be an operator on $\bar{\mathcal{H}}$ and $\bar{\mathcal{V}}$ by extending $f(\cdot,\rho)$, which is defined on $Q$, to $\bar{Q}$ by setting it equal to zero on $\bar{Q} \setminus Q$ and let $\mathcal{A^N} = \mathcal{A}^N(\rho^N)$. Then it follows from Assumptions (i) - (iii) that $\mathcal{A}$ is in $G(M,\lambda_0)$ on $\bar{\mathcal{H}}$ and $\mathcal{A^N}$ is in $G(M,\lambda_0)$ on $\mathcal{H}^N$ with $M$ and $\lambda_0$ independent of $N$.

In the statement of Theorem \eqref{th5.1}, set $\hat{H} = \bar{\mathcal{H}}$, $\hat{H}^N=\mathcal{H}^N$, $\hat{U}^N = \mathcal{U}^N$, $P^N=\mathcal{J}^N$, $\hat{A} = \mathcal{A}$, and $\hat{A}^N=\mathcal{A}^N$. To apply Theorem \eqref{th5.1} and conclude that in this case, \eqref{eq5.7} holds, we need only verify \eqref{eq5.6}. In order to do this, we require the following two additional assumptions

\begin{enumerate}[iv.]
\item There exist positive real numbers $\gamma$ and $\delta$ such that for any $\rho \in \Xi$, we have $0<\gamma\leq f(q;\rho)\leq\delta<\infty$ for $\pi(\rho)$-a.e. $q\in Q$.
\end{enumerate}

\begin{enumerate}[v.]
\item For all $w\in\bar{\mathcal{V}}$, there exists $u^N\in\mathcal{U}^N$ such that  $ ||u^N-\mathcal{J}^Nw||_{\mathcal{V}^N}\rightarrow0$ as $N\rightarrow\infty$.
\end{enumerate}

We are now able to prove the following theorem.

\begin{theorem}\label{th5.2}
Let assumptions (i) - (v) be satisfied and let $\{\rho^N\}, \rho \in \Xi$ be such that $f^N(q)\rightarrow f(q)$, for almost every $q\in \bar{Q}$, where  $f^N(q) = f(q;\rho^N)$ and $f(q) = f(q;\rho)$. Then, with the definitions above, the conditions of Theorem \eqref{th5.1} (and in particular the resolvent convergence specified in \eqref{eq5.6}) are satisfied. Consequently, it follows that
\begin{align}
    ||\mathcal{T}^N(t;\rho^N)P^Nz-\mathcal{J}^N\mathcal{T}(t;\rho)z||_{\mathcal{H}^N}\rightarrow0, \ \ as \ \ N\rightarrow\infty,\label{eq5.18}
\end{align}
for every $z\in\mathcal{H}$, uniformly in t on compact t-intervals where $\mathcal{T}^N=\{\mathcal{T}^N(t;\rho^N):t\geq0\}$ is the semigroup on $\mathcal{H}^N$ given by $\mathcal{T}^N(t;\rho^N)=e^{\mathcal{A}^Nt}=e^{\mathcal{A}^N(\rho^N)t}$ and $\mathcal{T}=\{\mathcal{T}(t;\rho):t\geq0\}$ is the semigroup on $\mathcal{H}$ and $\bar{\mathcal{H}}$ given by $\mathcal{T}(t;\rho)=e^{\mathcal{A}t}=e^{\mathcal{A}(\rho)t}$.
\end{theorem}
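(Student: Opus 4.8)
The plan is to apply Theorem~\ref{th5.1} with the identifications announced just before the statement, $\hat{H}=\bar{\mathcal{H}}$, $\hat{H}^N=\mathcal{H}^N$, $\hat{U}^N=\mathcal{U}^N$, $P^N=\mathcal{J}^N$, $\hat{A}=\mathcal{A}$, $\hat{A}^N=\mathcal{A}^N$, $\hat{S}(t)=\mathcal{T}(t;\rho)$, $\hat{S}^N(t)=\mathcal{T}^N(t;\rho^N)$. Its structural hypotheses are essentially in hand already: $\mathcal{A}\in G(M,\lambda_0)$ on $\bar{\mathcal{H}}$ and $\mathcal{A}^N\in G(M,\lambda_0)$ on $\mathcal{H}^N$ with $M,\lambda_0$ independent of $N$ was noted above and is inherited from the constants $\alpha_0,\mu_0,\lambda_0$ in (i)--(ii) because the densities have unit mass; and the surjectivity of $\mathcal{I}^N$ and the contraction bound $|\mathcal{I}^N z|_N\le|z|$ hold for the uniformly equivalent unweighted $L^2$ norms on $\bar{\mathcal{H}}$ and $\mathcal{H}^N$ (equivalence granted by Assumption (iv)), in which $\mathcal{I}^N$ is merely restriction to the sub-box $Q^N\subseteq\bar{Q}$. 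Thus the whole content of the proof is to verify the resolvent convergence \eqref{eq5.6}, i.e.\ $\|\mathcal{J}^N R_\lambda(\mathcal{A})z-R_\lambda(\mathcal{A}^N)\mathcal{J}^N z\|_{\mathcal{H}^N}\to 0$ for every $z\in\bar{\mathcal{H}}$ and some (hence every) $\lambda\ge\lambda_0$.

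First I would recast both resolvents variationally: $w:=R_\lambda(\mathcal{A})z\in\mathrm{Dom}(\mathcal{A})\subseteq\bar{\mathcal{V}}$ is the unique element of $\bar{\mathcal{V}}$ with $\lambda\langle w,\psi\rangle_{\bar{\mathcal{H}}}+\mathcal{a}(\rho;w,\psi)=\langle z,\psi\rangle_{\bar{\mathcal{H}}}$ for all $\psi\in\bar{\mathcal{V}}$, and $w^N:=R_\lambda(\mathcal{A}^N)\mathcal{J}^N z\in\mathcal{U}^N$ is the unique element of $\mathcal{U}^N$ with $\lambda\langle w^N,v^N\rangle_{\mathcal{H}^N}+\mathcal{a}(\rho^N;w^N,v^N)=\langle\mathcal{J}^N z,v^N\rangle_{\mathcal{H}^N}$ for all $v^N\in\mathcal{U}^N$; solvability is by Lax--Milgram, since (i)--(ii) integrated against the densities make $\mathcal{a}(\rho;\cdot,\cdot)$ and $\mathcal{a}(\rho^N;\cdot,\cdot)$ bounded and coercive on $\bar{\mathcal{V}}$ and $\mathcal{V}^N$ with $N$-independent constants, and testing the discrete equation with $v^N=w^N$ gives the uniform a priori bound $\|w^N\|_{\mathcal{V}^N}\le C|z|$. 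Then I would run a Strang-lemma-type estimate for this nonconforming, variable-form Galerkin scheme: write $w^N-\mathcal{J}^N w=(w^N-u^N)+(u^N-\mathcal{J}^N w)$ with $u^N\in\mathcal{U}^N$ the approximant from Assumption (v); since $w\in\bar{\mathcal{V}}$, the second summand tends to $0$ in $\mathcal{V}^N$ (hence in $\mathcal{H}^N$) by (v), and testing the discrete equation with $v^N=w^N-u^N$, subtracting the corresponding expression built from $w$, and using coercivity of $\mathcal{a}(\rho^N;\cdot,\cdot)$ bounds $\|w^N-u^N\|_{\mathcal{V}^N}$ by $\|u^N-\mathcal{J}^N w\|_{\mathcal{V}^N}$ plus the consistency defect
\begin{align}
\sup_{\|v^N\|_{\mathcal{V}^N}\le 1}\Big|\mathcal{a}(\rho^N;\mathcal{J}^N w,v^N)-\mathcal{a}(\rho;w,v^N)+\lambda\langle\mathcal{J}^N w-w,v^N\rangle-\langle\mathcal{J}^N z-z,v^N\rangle\Big|.\nonumber
\end{align}

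To make this defect vanish one adds and subtracts, in each term, the integral carrying the ``other'' weight: the pieces involving $\mathcal{J}^N w-w$ or $\mathcal{J}^N z-z$ go to $0$ from the approximation properties of $\mathcal{J}^N=\mathcal{P}^N\circ\mathcal{I}^N$ and (v), while the pieces of the form $\int_{\bar{Q}}a(q;w(q),v^N(q))\,\big(f^N(q)-f(q)\big)\,dq$ are estimated by Cauchy--Schwarz in $q$ together with the uniform boundedness in (i) and (iii), the dominated convergence theorem delivering the limit via the majorant $\delta$ of Assumption (iv); the two-sided bound (iv) is again what makes the $\mathcal{V}^N$-, $\bar{\mathcal{V}}$- and unweighted norms mutually comparable with $N$-independent constants, which is what makes the supremum over unit $v^N$ legitimate. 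The main obstacle is precisely this consistency estimate, because it forces one to reconcile simultaneously the moving sesquilinear form, whose weight $f^N$ converges only almost everywhere, the nonconformity $\mathcal{U}^N\not\subseteq\bar{\mathcal{V}}$ together with the varying parameter box $Q^N$ versus $\bar{Q}$, and test functions $v^N$ known only to be bounded, not convergent.

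A fallback I would keep in reserve, and which localizes this difficulty, is a compactness argument: the $N$-uniform bound on $\{w^N\}$ yields a subsequence converging weakly in a common space containing all the $\mathcal{V}^N$; passing to the limit in the discrete identity tested against a \emph{fixed} element of $\bigcup_{M}\mathcal{U}^{M}$ (dense in $\bar{\mathcal{V}}$ by (v)), using $f^N\to f$ a.e.\ and dominated convergence, identifies the weak limit as $w=R_\lambda(\mathcal{A})z$; a standard energy/coercivity argument then upgrades weak to strong convergence, and uniqueness of the limit removes the subsequence. In either route, once \eqref{eq5.6} is established the hypotheses of Theorem~\ref{th5.1} are met, so its conclusion \eqref{eq5.7} holds; with the identifications above this is exactly \eqref{eq5.18}, which completes the proof.
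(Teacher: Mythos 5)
Your proposal is correct and follows essentially the same route as the paper: reduce to $z\in\bar{\mathcal{V}}$ by density and uniform boundedness, characterize $w=R_\lambda(\mathcal{A})z$ and $w^N=R_\lambda(\mathcal{A}^N)\mathcal{J}^N z$ variationally, split the error through the approximant $u^N$ of Assumption (v), and control $\|w^N-u^N\|_{\mathcal{V}^N}$ by coercivity of $\mathcal{a}(\rho^N;\cdot,\cdot)$ plus a consistency defect whose weight-mismatch terms $\int_{\bar{Q}}a(q;w,\cdot)(f^N-f)\,dq$ are killed by dominated convergence via Assumption (iv). The paper carries out exactly this Strang-type estimate (using Young's inequality to absorb the $\|z^N\|_{\mathcal{V}^N}$ factors and the zero-extension $\mathcal{I}^{N^+}$ to transfer test functions, which is your ``other weight'' insertion), so no further comparison is needed.
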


\begin{proof}
First, note that if we can show resolvent convergence for every $z\in\bar{\mathcal{V}}$, then since $\bar{\mathcal{V}}$ is dense in $\bar{\mathcal{H}}$, and $\mathcal{J}^NR_{\lambda_0}(\mathcal{A})$ and $R_{\lambda_0}(\mathcal{A}^N)\mathcal{J}^N$ are uniformly bounded, the desired resolvent convergence for every $z\in\bar{\mathcal{H}}$ will have been demonstrated. In what follows, for any $\rho = (\vec{a},\vec{b},\vec{\theta})\in \Xi$, $f(\cdot;\rho)$ is defined on $Q=\prod_{i=1}^p[a_i,b_i]$, but it can be extended to be defined on $\bar{Q}$ by setting it equal to zero on $\bar{Q} \setminus Q$. We will use this fact frequently below without further remark.

Let $z\in\bar{\mathcal{V}}$ and define $w=R_{\lambda_0}(\mathcal{A})z$, and $w^N=R_{\lambda_0}(\mathcal{A}^N)\mathcal{J}^Nz$. Suppose also that $u^N\in\mathcal{U}^N$ be as in Assumption (v) for $w=R_{\lambda_0}(\mathcal{A})z$.

Then, by triangle inequality, we have
\begin{align}
    \begin{aligned}
        ||\mathcal{J}^Nw-w^N||_{\mathcal{V}^N}&\leq||\mathcal{J}^Nw-u^N+u^N-w^N||_{\mathcal{V}^N}\\
        &\leq||\mathcal{J}^Nw-u^N||_{\mathcal{V}^N}+||u^N-w^N||_{\mathcal{V}^N}.\label{eq5.19}
    \end{aligned}
\end{align}
Thus, \eqref{eq5.19}, Assumption (v) and the continuous embedding of $\mathcal{V}^N$ in $\mathcal{H}^N$ imply that it is enough to show that $||u^N-w^N||_{\mathcal{V}^N}\rightarrow0$ as $N\rightarrow\infty$. Let $z^N=w^N-u^N$. Then, since $w^N\in \mathcal{U}^N \subset \mathcal{V}^N)$,
\begin{align}
    \begin{aligned}
        \mathcal{a}(\rho^N;w^N,z^N)&=\left<-\mathcal{A}^Nw^N,z^N\right>_{\mathcal{H}^N}\\
        &=\left<(\lambda_0I-\mathcal{A}^N)R_{\lambda_0}(\mathcal{A}^N)\mathcal{J}^Nz,z^N\right>_{\mathcal{H}^N}-\lambda_0\left<w^N,z^N\right>_{\mathcal{H}^N}\\
        &=\left<\mathcal{J}^Nz,z^N\right>_{\mathcal{H}^N}-\lambda_0\left<w^N,z^N\right>_{\mathcal{H}^N}.\label{eq5.20}
    \end{aligned}
\end{align}
Also, since $w\in Dom(\mathcal{A})$,
\begin{align}
    \begin{aligned}
        \mathcal{a}(\rho;w,\mathcal{I}^{N^+}z^N)&=\left<-\mathcal{A}w,\mathcal{I}^{N^+}z^N\right>_{\bar{\mathcal{H}}}\\
        &=\left<(\lambda_0I-\mathcal{A})R_{\lambda_0}(\mathcal{A})z,\mathcal{I}^{N^+}z^N\right>_{\bar{\mathcal{H}}}-\lambda_0\left<w,\mathcal{I}^{N^+}z^N\right>_{\bar{\mathcal{H}}}\\
        &=\left<z,\mathcal{I}^{N^+}z^N\right>_{\bar{\mathcal{H}}}-\lambda_0\left<w,\mathcal{I}^{N^+}z^N\right>_{\bar{\mathcal{H}}},\label{eq5.21}
    \end{aligned}
\end{align}
where $\mathcal{I}^{N^+}$ denotes the Moore-Penrose generalized inverse \cite{GILT} of $\mathcal{I}^{N}$. We note that for $\psi \in \mathcal{H}^N$,  $\mathcal{I}^{N^+}\psi$ is the function in $\bar{\mathcal{H}}$ that agrees with $\psi$ on $Q^N$ and is zero on $\bar{Q} \setminus Q^N$. Then, from \eqref{eq5.20} and \eqref{eq5.21}, we obtain
\begin{align}
    \begin{aligned}
        \mathcal{a}(\rho^N;w^N,z^N)-\mathcal{a}(\rho;w,\mathcal{I}^{N^+}z^N)=&\left<\mathcal{I}^Nz,z^N\right>_{\mathcal{H}^N}-\lambda_0\left<w^N,z^N\right>_{\mathcal{H}^N}\\
        &-\left<z,\mathcal{I}^{N^+}z^N\right>_{\bar{\mathcal{H}}}+\lambda_0\left<w,\mathcal{I}^{N^+}z^N\right>_{\bar{\mathcal{H}}}. \label{eq5.22}
    \end{aligned}
\end{align}
Recalling Assumptions (i) and (ii) for the form $a(\cdot;\cdot,\cdot)$ on $V \times V$, let $\tilde{\alpha}_0$, $\tilde{\mu}_0$, $\tilde{\lambda}_0$ denote the boundedness and coercivity coefficients for the forms $\mathcal{a}(\cdot;\cdot,\cdot)$.  Then, using boundedness, coercivity, Assumptions (iv) and (v), Young's and the Cauchy Schwarz Inequalities, and the continuous embeddings of the space $V$ in the space $H$ (i.e. that there exist a constant $k$ such that $|\cdot|_H \leq k||\cdot||_V$) and \eqref{eq5.22}, for any $\varepsilon >0$, we obtain
\begingroup
\allowdisplaybreaks
\begin{align}
    \tilde{\mu}_0&||z^N||_{\mathcal{V}^N}\leq\mathcal{a}(\rho^N;z^N,z^N)+\tilde{\lambda}_0|z^N|_{\mathcal{H}^N}\nonumber\\
    &=\mathcal{a}(\rho^N;w^N,z^N)-\mathcal{a}(\rho^N;u^N,z^N)+\tilde{\lambda}_0|z^N|_{\mathcal{H}^N}^2\nonumber\\
    &=\mathcal{a}(\rho^N;w^N,z^N)-\mathcal{a}(\rho;w,\mathcal{I}^{N^+}z^N)\nonumber\\
        &\quad+\mathcal{a}(\rho;w,\mathcal{I}^{N^+}z^N)-\mathcal{a}(\rho^N;u^N,z^N)+\tilde{\lambda}_0|z^N|_{\mathcal{H}^N}^2\nonumber\\
    &=\left<\mathcal{I}^Nz,z^N\right>_{\mathcal{H}^N}-\tilde{\lambda}_0\left<w^N,z^N\right>_{\mathcal{H}^N}-\left<z,\mathcal{I}^{N^+}z^N\right>_{\bar{\mathcal{H}}}+\tilde{\lambda}_0\left<w,\mathcal{I}^{N^+}z^N\right>_{\bar{\mathcal{H}}}\nonumber\\
        &\quad+\int_{\bar{Q}}(a(q;w,z^N)f(q)-a(q;u^N,z^N)f^N(q))dq+\tilde{\lambda}_0|z^N|_{\mathcal{H}^N}^2\nonumber\\
    &=\int_{\bar{Q}}(\left<z,z^N\right>_H(f^N(q)-f(q))dq\nonumber\\
        &\quad+\tilde{\lambda}_0\int_{\bar{Q}}(\left<w,z^N\right>_Hf(q)-\left<u^N,z^N\right>_Hf^N(q))dq\nonumber\\
        &\quad+\int_{\bar{Q}}(a(q;w,z^N)f(q)-a(q;u^N,z^N)f^N(q))dq\nonumber\\
    &=\int_{\bar{Q}}(\left<z,z^N\right>_H(f^N(q)-f(q))dq+\tilde{\lambda}_0\int_{\bar{Q}}\left<w,z^N\right>_H(f(q)-f^N(q))dq\nonumber\\
        &\quad+\tilde{\lambda}_0\int_{\bar{Q}}\left<w-u^N,z^N\right>_Hf^N(q)dq+\int_{\bar{Q}}(a(q;w,z^N)(f(q)-f^N(q))dq\nonumber\\
        &\quad+\int_{\bar{Q}}a(q;w-u^N,z^N)f^N(q)dq\nonumber\\
    &\leq\int_{\bar{Q}}|z|_H \ |z^N|_H \ |f^N(q)-f(q)|dq+\tilde{\lambda}_0\int_{\bar{Q}}|w|_H \ |z^N|_H \ |f(q)-f^N(q)|dq\nonumber\\
        &\quad+\tilde{\lambda}_0\int_{\bar{Q}}|w-u^N|_H \ |z^N|_Hf^N(q)dq+\alpha_0\int_{\bar{Q}}||w||_V \ ||z^N||_V \ |f(q)-f^N(q)|dq\nonumber\\
        &\quad+\alpha_0\int_{\bar{Q}}||w-u^N||_V \ ||z^N||_Vf^N(q)dq\nonumber\\
    &\leq\frac{\varepsilon k^2}{2\alpha}\int_{Q^N}||z^N||_V^2f^N(q)dq+\frac{1}{2\varepsilon}\int_{\bar{Q}}|z|_H^2|f^N(q)-f(q)|^2dq\nonumber\\
        &\quad+\frac{\tilde{\lambda}_0\varepsilon k^2}{2\alpha}\int_{Q^N}||z^N||_V^2f^N(q)dq+\frac{\tilde{\lambda}_0 k^2}{2\varepsilon}\int_{\bar{Q}}||w||_V^2|f^N(q)-f(q)|^2dq\nonumber\\    
        &\quad+\frac{\tilde{\lambda}_0\varepsilon k^2}{2}\int_{Q^N}||z^N||_V^2f^N(q)dq+\frac{\tilde{\lambda}_0 k^2}{2\varepsilon}\int_{Q^N}||w-u^N||_V^2f^N(q)dq\nonumber\\
        &\quad+\frac{\alpha_0\varepsilon}{2\alpha}\int_{Q^N}||z^N||_V^2f^N(q)dq+\frac{\alpha_0}{2\varepsilon}\int_{\bar{Q}}||w||_V^2|f^N(q)-f(q)|^2dq\nonumber\\
        &\quad+\frac{\alpha_0\varepsilon}{2}\int_{Q^N}||z^N||_V^2f^N(q)dq+\frac{\alpha_0}{2\varepsilon}\int_{Q^N}||w-u^N||_V^2f^N(q)dq.\label{eq5.23}
\end{align}
\endgroup
Then, letting $\tilde{c} = \tilde{\mu}_0-\frac{\varepsilon}{2\alpha}\left(k^2(\tilde{\lambda}_0 + 1)+(\alpha+1)(\tilde{\lambda}_0 k^2+\alpha_0)\right)$, it follows from \eqref{eq5.23} that
\begin{align}
    \begin{aligned}
    \tilde{c}||z^N||_{\mathcal{V}^N}^2\leq\frac{1}{2\varepsilon}\int_{\bar{Q}}|z|_H^2|f^N(q)-&f(q)|^2dq+\frac{\tilde{\lambda}_0 k^2+\alpha_0}{2\varepsilon}\int_{Q^N}||w-u^N||_V^2f^N(q)dq\\
    &+\frac{\tilde{\lambda}_0 k^2+\alpha_0}{2\varepsilon}\int_{\bar{Q}}||w||_V^2|f^N(q)-f(q)|^2dq\\
    =\frac{1}{2\varepsilon}\int_{\bar{Q}}|z|_H^2|f^N(q)-&f(q)|^2dq+\frac{\tilde{\lambda}_0 k^2+\alpha_0}{2\varepsilon}||\mathcal{J}^Nw - u^N||_{\mathcal{V}^N}^2 \\
    &+\frac{\tilde{\lambda}_0 k^2+\alpha_0}{2\varepsilon}\int_{\bar{Q}}||w||_V^2|f^N(q)-f(q)|^2dq.\label{eq5.24}
    \end{aligned}
\end{align}
Choosing $\varepsilon$ positive, but sufficiently small in \eqref{eq5.24}, it follows from Assumption (v) and the hypotheses of the theorem that
\begin{align}
    ||w^N-u^N||_{\mathcal{V}^N}=||z^N||_{\mathcal{V}^N}\rightarrow0 \ \ as \ \ N\rightarrow\infty.\label{eq5.25}
\end{align}
Thus \eqref{eq5.25} together with \eqref{eq5.19}, and Assumption (v) yield resolvent convergence and the theorem is proved.
\end{proof}

We note that in the proof of Theorem \eqref{th5.2} we were in fact able to establish resolvent convergence in the $\mathcal{V}^N$ norm. Consequently we may conclude that the semigroup convergence in \eqref{eq5.18} is in the $\mathcal{V}^N$ norm as well.  Moreover, it is not difficult to establish the following corollary to Theorem \eqref{th5.2}.

\begin{corollary}
\label{co5.1}
Under the same hypotheses of Theorem \eqref{th5.2}, we have
\begin{align}
    \begin{aligned}
    ||\mathcal{x}_{i,j}^N(\rho^N) -\mathcal{J}^N\mathcal{x}_{i,j}(\rho)||_{\mathcal{V}^N}\rightarrow0, \ \ as \ \ N\rightarrow\infty,\\
    |\mathcal{y}_{i,j}^N(\rho^N) -\mathcal{y}_{i,j}(\rho)|_{\mathbb{R}^{\nu}}\rightarrow0, \ \ as \ \ N\rightarrow\infty,\label{eq5.27}
    \end{aligned}
\end{align}
for every $i=1,2,...,m$, uniformly in $j$, for $j = 0,1,2,...,n_i$, where $\mathcal{x}_{i,j}^N(\rho^N)$ and $\mathcal{y}_{i,j}^N(\rho^N)$ are given in \eqref{eq5.5} and $\mathcal{x}_{i,j}(\rho)$ and $\mathcal{y}_{i,j}(\rho)$ are given in \eqref{eq4.7}.
\end{corollary}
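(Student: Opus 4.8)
The plan is to prove the state estimate in \eqref{eq5.27} by induction on $j$, peeling off one step of the discrete recursions \eqref{eq5.5} and \eqref{eq4.7} at a time, and then to read off the output estimate from the state estimate. Two ingredients drive the induction. First, the semigroup convergence of Theorem \eqref{th5.2}, evaluated at the single time $t=\tau$ and in the $\mathcal{V}^N$ norm (the remark following that theorem): since $\hat{\mathcal{A}}^N(\rho^N)=\mathcal{T}^N(\tau;\rho^N)$, $\hat{\mathcal{A}}(\rho)=\mathcal{T}(\tau;\rho)$ and $P^N=\mathcal{J}^N$, this gives $\|\hat{\mathcal{A}}^N(\rho^N)\mathcal{J}^Nz-\mathcal{J}^N\hat{\mathcal{A}}(\rho)z\|_{\mathcal{V}^N}\to0$ for every $z\in\mathcal{V}$. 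Second, a uniform operator bound: because the $\mathcal{A}^N(\rho^N)$ lie in $G(M,\lambda_0)$ on $\mathcal{V}^N$ with $M,\lambda_0$ independent of $N$ (the set-up used for Theorem \eqref{th5.2}, via Assumptions (i)--(iv)), we have $\|\hat{\mathcal{A}}^N(\rho^N)\|_{\mathcal{L}(\mathcal{V}^N,\mathcal{V}^N)}\leq Me^{\lambda_0\tau}$ for all $N$.

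The only step requiring real work is the convergence of the discrete input operators, $\|\hat{\mathcal{B}}^N(\rho^N)u-\mathcal{J}^N\hat{\mathcal{B}}(\rho)u\|_{\mathcal{V}^N}\to0$ for each $u\in\mathbb{R}^\mu$, because $\mathcal{B}(\rho)$ maps into $\mathcal{V}^*$ so $\mathcal{B}(\rho)u$ cannot be fed directly into the semigroup convergence, which is stated for elements of $\bar{\mathcal{H}}$. I would remove the unboundedness exactly as in Section \eqref{discrete.time}: taking $\mathcal{A}(\rho)$ boundedly invertible, write $\hat{\mathcal{B}}(\rho)u=(\hat{\mathcal{A}}(\rho)-I)\mathcal{A}(\rho)^{-1}\mathcal{B}(\rho)u=\mathcal{T}(\tau;\rho)b-b$ with $b:=\mathcal{A}(\rho)^{-1}\mathcal{B}(\rho)u\in\mathcal{V}$, and similarly $\hat{\mathcal{B}}^N(\rho^N)u=\mathcal{T}^N(\tau;\rho^N)b^N-b^N$ with $b^N:=\mathcal{A}^N(\rho^N)^{-1}\mathcal{B}^N(\rho^N)u\in\mathcal{U}^N$. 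Then
\[
\hat{\mathcal{B}}^N(\rho^N)u-\mathcal{J}^N\hat{\mathcal{B}}(\rho)u
=\mathcal{T}^N(\tau;\rho^N)\bigl(b^N-\mathcal{J}^Nb\bigr)
+\bigl(\mathcal{T}^N(\tau;\rho^N)\mathcal{J}^N-\mathcal{J}^N\mathcal{T}(\tau;\rho)\bigr)b
-\bigl(b^N-\mathcal{J}^Nb\bigr),
\]
the middle term tends to zero by Theorem \eqref{th5.2} (with $z=b\in\mathcal{V}$), and the other two are bounded by $(Me^{\lambda_0\tau}+1)\|b^N-\mathcal{J}^Nb\|_{\mathcal{V}^N}$. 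So everything comes down to $\|b^N-\mathcal{J}^Nb\|_{\mathcal{V}^N}\to0$; but $b$ satisfies $\mathcal{a}(\rho;b,\psi)=-\langle\mathcal{B}(\rho)u,\psi\rangle_{\mathcal{V}^*,\mathcal{V}}$ for all $\psi\in\mathcal{V}$, while $b^N$ satisfies the Galerkin identity $\mathcal{a}(\rho^N;b^N,v^N)=-\langle\mathcal{B}^N(\rho^N)u,v^N\rangle_{\mathcal{V}^*,\mathcal{V}}=-\int_{Q^N}\langle b(q),v^N(q)\rangle_{V^*,V}\,u\,f^N(q)\,dq$ for all $v^N\in\mathcal{U}^N$, so this is precisely the elliptic (resolvent) convergence already established in the proof of Theorem \eqref{th5.2}, now for the more regular right-hand-side functional $\psi\mapsto\langle\mathcal{B}(\rho)u,\psi\rangle$; the same coercivity-plus-Young's-inequality estimate goes through, using that $\|b(q)\|_{V^*}$ is uniformly bounded, that $f^N\to f$ a.e.\ with the two-sided bound in Assumption (iv), and Assumption (v).

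With this in hand the induction closes. The base case is immediate since $\mathcal{x}_{i,0}^N=\mathcal{J}^N\mathcal{x}_{i,0}$ by definition, so $\mathcal{x}_{i,0}^N-\mathcal{J}^N\mathcal{x}_{i,0}=0$. For the step, subtract $\mathcal{J}^N$ applied to \eqref{eq4.7} from \eqref{eq5.5} to get
\[
\mathcal{x}_{i,j+1}^N-\mathcal{J}^N\mathcal{x}_{i,j+1}
=\hat{\mathcal{A}}^N(\rho^N)\bigl(\mathcal{x}_{i,j}^N-\mathcal{J}^N\mathcal{x}_{i,j}\bigr)
+\bigl(\hat{\mathcal{A}}^N(\rho^N)\mathcal{J}^N-\mathcal{J}^N\hat{\mathcal{A}}(\rho)\bigr)\mathcal{x}_{i,j}
+\bigl(\hat{\mathcal{B}}^N(\rho^N)-\mathcal{J}^N\hat{\mathcal{B}}(\rho)\bigr)\tilde{u}_{i,j},
\]
whose three terms go to zero in $\mathcal{V}^N$ by the induction hypothesis together with the uniform bound on $\hat{\mathcal{A}}^N(\rho^N)$, by the first ingredient above (with $z=\mathcal{x}_{i,j}\in\mathcal{V}$), and by the input estimate, respectively; since $j$ runs over the finite set $\{0,\dots,n_i\}$ and there are finitely many $i$, the convergence is automatically uniform in $j$. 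The output estimate then follows by writing $\mathcal{y}_{i,j}^N(\rho^N)-\mathcal{y}_{i,j}(\rho)=\mathcal{C}^N(\rho^N)\bigl(\mathcal{x}_{i,j}^N-\mathcal{J}^N\mathcal{x}_{i,j}\bigr)+\bigl(\mathcal{C}^N(\rho^N)\mathcal{J}^N-\mathcal{C}(\rho)\bigr)\mathcal{x}_{i,j}$: the first summand vanishes in the limit because the functionals $\mathcal{C}^N(\rho^N)$ are uniformly bounded on $\mathcal{V}^N$ ($\|c(q)\|_{V^*}$ uniformly bounded and $f^N\leq\delta$) and the state difference goes to zero, and the second summand vanishes by dominated convergence using $f^N\to f$ a.e., Assumption (iv), and Assumption (v) to compare $\mathcal{J}^N\mathcal{x}_{i,j}$ with the restriction $\mathcal{x}_{i,j}|_{Q^N}$. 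The step I expect to be the main obstacle is precisely the input-operator convergence: the unboundedness of $\mathcal{B}$ forces the factorization through $\mathcal{A}(\rho)^{-1}$, reducing the matter to a second instance of the elliptic convergence argument, and verifying that the associated functionals converge in the required sense is the delicate bookkeeping there.
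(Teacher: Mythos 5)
The paper does not actually supply a proof of Corollary \eqref{co5.1} --- it is introduced only with the remark that it ``is not difficult to establish'' --- so your write-up is filling a genuine gap rather than paralleling an existing argument. What you propose is the natural route and, in my reading, the one the authors intend: induction on $j$ through the discrete recursions \eqref{eq5.5} and \eqref{eq4.7}, with the commutator term handled by Theorem \eqref{th5.2} at $t=\tau$ in the $\mathcal{V}^N$ norm (using the remark after that theorem), the propagation term by the uniform exponential bound on $\hat{\mathcal{A}}^N(\rho^N)$, and the input term by factoring $\hat{\mathcal{B}}(\rho)=(\hat{\mathcal{A}}(\rho)-I)\mathcal{A}(\rho)^{-1}\mathcal{B}(\rho)$ exactly as in Sections \eqref{discrete.time} and \eqref{rand.reg}, which correctly reduces the unbounded-input difficulty to a second run of the elliptic Galerkin estimate \eqref{eq5.19}--\eqref{eq5.25} with the $\mathcal{V}^*$-functional $\psi\mapsto\langle\mathcal{B}(\rho)u,\psi\rangle_{\mathcal{V}^*,\mathcal{V}}$ on the right-hand side; that estimate does go through using the uniform bound on $||b(q)||_{V^*}$ and Assumptions (iv)--(v). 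Two points deserve an explicit sentence each if this is written out in full. First, the uniform bound $||\hat{\mathcal{A}}^N(\rho^N)||_{\mathcal{L}(\mathcal{V}^N,\mathcal{V}^N)}\leq M'e^{\lambda_0'\tau}$ is not literally the $G(M,\lambda_0)$ statement on $\mathcal{H}^N$ (the $\mathcal{V}^N$ and $\mathcal{H}^N$ norms are not uniformly equivalent on $\mathcal{U}^N$); it follows instead from the uniform boundedness and coercivity of the forms $\mathcal{a}(\rho^N;\cdot,\cdot)$ and the analytic-semigroup estimates of \cite{BI,BK}, so it should be cited as such. Second, in the output step the term $\bigl(\mathcal{C}^N(\rho^N)\mathcal{J}^N-\mathcal{C}(\rho)\bigr)\mathcal{x}_{i,j}$ requires comparing $\mathcal{J}^N\mathcal{x}_{i,j}=\mathcal{P}^N\mathcal{I}^N\mathcal{x}_{i,j}$ with $\mathcal{I}^N\mathcal{x}_{i,j}$ in the $\mathcal{V}^N$ norm, and $\mathcal{P}^N$ is only the $\mathcal{H}^N$-orthogonal projection; you need either a uniform $\mathcal{V}^N$-bound on $\mathcal{P}^N$ or to invoke Assumption (v) in the strengthened form $||u^N-\mathcal{I}^N w||_{\mathcal{V}^N}\rightarrow 0$ (which is surely what that assumption is meant to say, since as literally written, with $\mathcal{J}^N w\in\mathcal{U}^N$, it is vacuous). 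Neither point undermines the argument; they are the same implicit conventions the paper relies on in the proof of Theorem \eqref{th5.2} itself.
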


The assumption that the feasible parameter set $\Xi$ is closed and bounded in $\mathbb{R}^{2p+r}$, together with \eqref{eq5.27} in the statement of Corollary \eqref{co5.1} and Theorem \eqref{th2.1} then yield the following result. 

\begin{theorem}
\label{th5.3}
If, in addition to Assumptions (i)-(v), we assume that the maps $\rho \mapsto f(q;\rho)$ from $\Xi$ to $\mathbb{R}$ are continuous for $\pi(\rho)$ a.e. $q \in \bar{Q}$, then each of the approximating estimation problems admits a solution, $\rho^{N*}$. Moreover, the sequence $\{\rho^{N*}\}$ has a convergent subsequence, $\{\rho^{N_{k}*}\}$ with $\rho^{N_{k}*}\rightarrow \rho^*$ and $\rho^*$ a solution to the original estimation problem.
\end{theorem}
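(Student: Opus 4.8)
The plan is to obtain Theorem~\eqref{th5.3} as a direct consequence of the abstract convergence result Theorem~\eqref{th2.1}, by verifying its three hypotheses (A), (B), (C) for the present parametrization. Here the compact parameter set ``$\Theta$'' of Theorem~\eqref{th2.1} is taken to be $\Xi\subset\mathbb{R}^{2p+r}$, the parametrized family ``$\mathcal{F}_{\Theta}(Q)$'' to be $\{f(\cdot;\rho):\rho\in\Xi\}$ with each $f(\cdot;\rho)$ extended by zero from $Q=Q(\rho)$ to the fixed domain $\bar{Q}$ (so that all feasible densities live on the common set $\bar{Q}$, which absorbs the $\rho$-dependence of the support), ``$v_i(t_j;f)$'' to be $\mathcal{y}_{i,j}(\rho)$ as given by \eqref{eq4.7} and \eqref{eq4.9}, ``$v_i^N(t_j;f)$'' to be $\mathcal{y}_{i,j}^N(\rho)$ as given by \eqref{eq5.5}, and $J$, $J^N$ to be \eqref{eq4.10}, \eqref{eq5.4}. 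Once (A), (B), (C) are checked, Theorem~\eqref{th2.1} immediately delivers minimizers $\rho^{N*}$ of $J^N$ over $\Xi$ and a subsequence $\rho^{N_k*}\to\rho^*\in\Xi$ with $\rho^*$ minimizing $J$, which is precisely the assertion.

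To verify (A), the continuity of $\rho\mapsto f(q;\rho)$ for a.e.\ $q$ is exactly the added hypothesis of the theorem (Assumption (iv) ensures that $\pi(\rho)$-null and Lebesgue-null subsets of $Q(\rho)$ coincide, so the phrase ``a.e.'' is unambiguous), and it remains to show that $\rho\mapsto J^N(\rho)$ is continuous, for which it suffices that $\rho\mapsto\mathcal{y}_{i,j}^N(\rho)$ be continuous for each $i,j,N$. Fixing a basis of the finite dimensional space $\mathcal{U}^N$, the matrix representations of $\mathcal{A}^N(\rho)$, $\mathcal{B}^N(\rho)$ and $\mathcal{C}^N(\rho)$ are, by \eqref{eq5.1}--\eqref{eq5.3}, integrals over $\bar{Q}$ of fixed integrands against $f(q;\rho)\,dq$; since $\rho^k\to\rho$ forces $f(\cdot;\rho^k)\to f(\cdot;\rho)$ a.e.\ while Assumption (iv) supplies the uniform bound $f(q;\rho)\le\delta$ and $\bar{Q}$ has finite Lebesgue measure, dominated convergence gives continuity of these matrices in $\rho$. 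On the finite dimensional space $\mathcal{U}^N$ the matrix exponential $\hat{\mathcal{A}}^N(\rho)=e^{\mathcal{A}^N(\rho)\tau}$ and $\hat{\mathcal{B}}^N(\rho)=\int_0^{\tau}e^{\mathcal{A}^N(\rho)s}\mathcal{B}^N(\rho)\,ds$ then depend continuously on $\rho$, and composing the recursion \eqref{eq5.5} over the finitely many time steps shows $\mathcal{x}_{i,j}^N(\rho)$, hence $\mathcal{y}_{i,j}^N(\rho)=\mathcal{C}^N(\rho)\mathcal{x}_{i,j}^N(\rho)$, continuous in $\rho$; so $\rho\mapsto J^N(\rho)$ is continuous.

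Hypothesis (B) is essentially immediate from what has already been proved: given any sequence $\{\rho^N\}\subset\Xi$ with $f(\cdot;\rho^N)\to f(\cdot;\rho)$ a.e.\ on $\bar{Q}$ for some $\rho\in\Xi$ --- and every feasible density is of this form, so an arbitrary pointwise-convergent sequence of feasible densities is covered --- Assumptions (i)--(v) hold, so Theorem~\eqref{th5.2} applies and Corollary~\eqref{co5.1} gives $|\mathcal{y}_{i,j}^N(\rho^N)-\mathcal{y}_{i,j}(\rho)|\to0$ for each $i$, uniformly in $j$, which is (B). For (C), Assumptions (i), (ii) and (iv) make the averaged forms $\mathcal{a}(\rho;\cdot,\cdot)$ bounded and coercive with constants independent of $\rho\in\Xi$, whence $e^{\mathcal{A}(\rho)t}$ and $e^{\mathcal{A}^N(\rho)t}$ lie in $G(M,\lambda_0)$ with $M,\lambda_0$ independent of $\rho$ and $N$, and uniform coercivity makes $\mathcal{A}(\rho)^{-1}$ and $\mathcal{A}^N(\rho)^{-1}$ uniformly bounded; combined with the assumed uniform bounds on $\|b(q)\|_{V^*}$ and $\|c(q)\|_{V^*}$ this bounds $\hat{\mathcal{A}}(\rho)$, $\hat{\mathcal{A}}^N(\rho)$, $\hat{\mathcal{B}}(\rho)$, $\hat{\mathcal{B}}^N(\rho)$, $\hat{\mathcal{C}}(\rho)$, $\hat{\mathcal{C}}^N(\rho)$ uniformly in $\rho$ and $N$, and since the data $\{\tilde{u}_{i,k}\}$, the initial conditions, and the numbers of time steps $n_i$ are all fixed, iterating \eqref{eq4.7} and \eqref{eq5.5} finitely often bounds $\|\mathcal{x}_{i,j}\|$, $\|\mathcal{x}_{i,j}^N\|$ --- hence $|\mathcal{y}_{i,j}(\rho)|$, $|\mathcal{y}_{i,j}^N(\rho)|$ --- uniformly in $\rho$ and $N$, giving (C). Theorem~\eqref{th2.1} then finishes the proof. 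The analytic substance has already been expended in Theorems~\eqref{th5.1} and \eqref{th5.2} and Corollary~\eqref{co5.1}, so the main point requiring care is the uniformity in $\rho$ (and $N$) of the constants entering (C) --- especially the uniform bound on $\mathcal{A}^N(\rho)^{-1}$, hence on $\hat{\mathcal{B}}^N(\rho)$, which rests on the uniform coercivity of the averaged forms ensured by Assumptions (ii) and (iv) --- along with the consistent handling of the $\rho$-dependent support via its zero-extension to $\bar{Q}$.
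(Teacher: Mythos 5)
Your proposal is correct and follows exactly the route the paper intends: the paper derives Theorem \ref{th5.3} in a single sentence from the compactness of $\Xi$, the output convergence \eqref{eq5.27} of Corollary \ref{co5.1} (your hypothesis (B)), and Theorem \ref{th2.1}. Your explicit verifications of hypotheses (A) and (C) — continuity of the finite-dimensional matrices via dominated convergence, and uniform boundedness of the outputs via the $\rho$- and $N$-independent coercivity and boundedness constants — simply fill in details the paper leaves implicit.
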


It is also possible to establish a consistency result for the estimator $\rho^{*} = (\vec{a}^{*},\vec{b}^{*},\vec{\theta}^{*}) \in \Xi$.  We require the following additional assumptions:

(a)	The measurement noise $\{\varepsilon_{j,i}\}$ is i.i.d. with respect to a probability space $\{\Omega,\Sigma,P\}$ with $\mathbb{E}[\varepsilon_{j,i}||P]=0$ and $Var[\varepsilon_{j,i}||P]=\sigma^2$,

(b)	The feasible set of parameters $\Xi$ is compact (i.e. closed and bounded since it is finite dimensional) and has nonempty interior,

(c)	For $i=1,2, . . .$, $n_i = n$ and $n\tau = T$ for some positive integer $n$ and some $T > 0$, where $\tau$ is the sampling time defined in Section 3,

(d)	That $\tilde{y}_{i,j} = \mathcal{y}_{i,j}(\{\tilde{u}_{i,k}\}_{k=0}^{n_i-1},\rho_0) + \varepsilon_{j,i}$, for some $\rho_0 \in int\{\Xi\}$, where for $i=1,2,...,m$, $\mathcal{y}_{i,j}(\{\tilde{u}_{i,k}\}_{k=0}^{n_i-1},\rho)$ is given by \eqref{eq4.7} with $u_j=\tilde{u}_{i,j}$, $j=0,...,n_i$, $i=1,2,...,m$, and \eqref{eq4.9}, and

(e)	For each $i = 1,2, . . .,m$, $\rho_0 \in \Xi$ is the unique minimizer of $J_{i,0}$ in $Xi$ where
\begin{align}
             J_{i,0}(\rho) = \sigma^2 + \int_0^T(\mathcal{y}(t;\tilde{u}_{i},\rho_0) - \mathcal{y}(t;\tilde{u}_{i},\rho))^2dt, \label{eq5.28}
\end{align}
and  $\mathcal{y}(t;\tilde{u}_{i},\rho)$ is given by \eqref{eq4.4} -\eqref{eq4.6}with $u = \tilde{u}_i$. 

Then a straight forward application of Theorem 4.2 in \cite{BT} can then be used to establish the following lemma and theorem (see \cite{SR2017}).

\begin{lemma}\label{l5.1}
If in addition to Assumptions (i)-(iv) and (a) – (e) above we assume that the maps $\rho \mapsto f(q;\rho)$ from $\Xi$ to $\mathbb{R}$ are continuous for $\pi(\rho)$ a.e. $q \in \bar{Q}$, then there exists an event $A \in \Sigma$ with $P(A) = 1$ such that for all $\omega \in A$ and $J$ as given in \eqref{eq4.10} we have
\begin{align}
              \frac{1}{m}\sum_{i=1}^{n}\{\frac{1}{n}J_i(\rho) - J_{i,0}(\rho) \} \rightarrow 0,\nonumber
\end{align}
as $n,m \rightarrow \infty$ and $\tau \rightarrow 0$, with $n\tau = T$, uniformly in $\rho$ for $\rho \in \Xi$, where $J_i$ is given by \eqref{eq4.10} and $J_{i,0}$ by \eqref{eq5.28}.
\end{lemma}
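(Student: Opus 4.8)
The plan is to mimic the proof of Theorem 4.2 in \cite{BT}: expand the summand, split it into one deterministic and two stochastic pieces, and control each piece uniformly over the compact set $\Xi$ on a single probability-one event. Abbreviating the discrete-time output $\mathcal{y}_{i,j}(\{\tilde u_{i,k}\}_{k=0}^{n_i-1},\rho)$ by $\mathcal{y}_{i,j}(\rho)$ and setting $\eta_{i,j}(\rho)=\mathcal{y}_{i,j}(\rho)-\mathcal{y}_{i,j}(\rho_0)$, Assumption (d) gives $\tilde y_{i,j}=\mathcal{y}_{i,j}(\rho_0)+\varepsilon_{j,i}$, so (with inner products in $\mathbb{R}^\nu$ understood)
\begin{align}
\tfrac1nJ_i(\rho)-J_{i,0}(\rho)&=\Big(\tfrac1n\sum_{j=0}^{n}|\eta_{i,j}(\rho)|^2-\int_0^T|\mathcal{y}(t;\tilde u_i,\rho_0)-\mathcal{y}(t;\tilde u_i,\rho)|^2dt\Big)\nonumber\\
&\quad-\tfrac2n\sum_{j=0}^{n}\langle\eta_{i,j}(\rho),\varepsilon_{j,i}\rangle+\Big(\tfrac1n\sum_{j=0}^{n}|\varepsilon_{j,i}|^2-\sigma^2\Big),\nonumber
\end{align}
and it suffices to show that the $\tfrac1m\sum_{i=1}^m$ average of each bracketed expression tends to $0$, uniformly in $\rho\in\Xi$, as $m,n\to\infty$ and $\tau\to0$ with $n\tau=T$.

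The last term is $\rho$-independent. By Assumption (a) the doubly-indexed array $\{\varepsilon_{j,i}\}$ is i.i.d. with $\mathbb{E}[|\varepsilon_{j,i}|^2||P]=\sigma^2<\infty$, so applying the strong law of large numbers to a linear ordering of the array gives $\tfrac1{m(n+1)}\sum_{i=1}^m\sum_{j=0}^n|\varepsilon_{j,i}|^2\to\sigma^2$ $P$-a.s. as $m(n+1)\to\infty$, hence $\tfrac1m\sum_{i=1}^m\big(\tfrac1n\sum_{j=0}^n|\varepsilon_{j,i}|^2-\sigma^2\big)\to0$ $P$-a.s. For the first, deterministic, term I would use the variation-of-constants formulas \eqref{eq4.5}--\eqref{eq4.6}, the uniform generation bounds from Section \eqref{rand.reg} that underlie Theorem \eqref{th5.1}, the compactness of $\Xi$, Assumptions (i)--(iv), the assumed continuity of $\rho\mapsto f(q;\rho)$, and the (implicitly assumed) uniform boundedness of the admissible inputs, to conclude that $\rho\mapsto\mathcal{y}(\cdot;\tilde u_i,\rho)$ is continuous from $\Xi$ into $L_2([0,T],\mathbb{R}^\nu)$ with a bound and a modulus of continuity that do not depend on $i$, and that $\mathcal{y}_{i,j}(\rho)=\mathcal{y}(t_j;\tilde u_i,\rho)$ for the zero-order-hold continuous-time input; a standard Riemann-sum estimate (using $n\tau=T$) then forces the first bracket to $0$ uniformly in $i$ and $\rho$ as $\tau\to0$, and therefore so does its average over $i$.

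The crux is the middle, mixed, term $\tfrac2n\sum_{j=0}^n\langle\eta_{i,j}(\rho),\varepsilon_{j,i}\rangle$, which must be shown to vanish \emph{uniformly in $\rho$} on a single probability-one event. For fixed $\rho$, the summands $\langle\eta_{i,j}(\rho),\varepsilon_{j,i}\rangle$, ranged over $(i,j)$, are independent (by Assumption (a)), mean zero, and uniformly bounded --- the outputs $\mathcal{y}_{i,j}(\rho)$ being bounded uniformly in $i$, $j$, and $\rho\in\Xi$ by the uniform semigroup estimates above --- so the classical strong law for independent, uniformly bounded, mean-zero summands gives $\tfrac1{m(n+1)}\sum_{i,j}\langle\eta_{i,j}(\rho),\varepsilon_{j,i}\rangle\to0$ $P$-a.s.; that is, the $i$-average of the middle term converges to $0$ a.s. for each fixed $\rho$. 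To upgrade this to uniformity in $\rho$, I would cover the compact set $\Xi$ by finitely many $\epsilon$-balls, invoke the pointwise statement at the (countably many, along a null sequence $\epsilon\downarrow0$) ball centers, and bound the oscillation of $\rho\mapsto\tfrac1m\sum_i\tfrac1n\sum_j\langle\eta_{i,j}(\rho),\varepsilon_{j,i}\rangle$ over each $\epsilon$-ball by the (uniform-in-$i,j$) equicontinuity of $\rho\mapsto\mathcal{y}_{i,j}(\rho)$ times $\tfrac1{m(n+1)}\sum_{i,j}|\varepsilon_{j,i}|\le\big(\tfrac1{m(n+1)}\sum_{i,j}|\varepsilon_{j,i}|^2\big)^{1/2}$, which is a.s. eventually bounded by the second step.

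Intersecting the countably many probability-one events produced in the three steps yields the event $A$ with $P(A)=1$ on which the stated triple limit holds uniformly in $\rho\in\Xi$, which is the assertion of the lemma; this is precisely the structure of the proof of Theorem 4.2 in \cite{BT}, and the remaining routine verifications may be found in \cite{SR2017}. I expect the main obstacle to be exactly the uniform-in-$\rho$ strong law for the noise--signal cross term: carrying out the covering/equicontinuity estimate rigorously while ensuring that every constant (the output bound, the modulus of continuity, and the exceptional sets of the strong law) is independent of the subject index $i$ and of the coupled sampling parameters $n$ and $\tau$.
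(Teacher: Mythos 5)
Your proposal is correct and follows essentially the same route as the paper, which offers no self-contained argument but simply invokes Theorem 4.2 of \cite{BT} (with details deferred to \cite{SR2017}); your three-term decomposition into a Riemann-sum discretization error, a signal--noise cross term, and a centered noise-variance term, handled respectively by uniform continuity over the compact set $\Xi$, a covering/equicontinuity upgrade of the pointwise strong law, and the strong law for the i.i.d. array, is exactly the structure of that cited result. You also correctly identify the genuinely delicate point, namely obtaining the almost-sure limit of the cross term uniformly in $\rho$ with all constants independent of $i$ and of the coupled parameters $n$ and $\tau$.
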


\begin{theorem}\label{th5.4}
(Consistency of the estimator $\rho^*$) Let $\rho^* \in \Xi$ be as defined in \eqref{eq4.10} in Section 5.1. Then under the assumptions of Lemma \eqref{l5.1}the estimator $\rho^{*} = (\vec{a}^{*},\vec{b}^{*},\vec{\theta}^{*}) \in \Xi$ is consistent for $\rho_0$. That is $\rho^* \ rightarrow \rho_0$ in probability with repsect to the probability measure $P$, as $m,n \rightarrow \infty$, and $\tau \rightarrow 0$ with $n\tau = T$.
\end{theorem}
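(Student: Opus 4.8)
The plan is to prove Theorem \eqref{th5.4} by the classical extremum--estimator (M--estimator) consistency argument, realized through Theorem~4.2 in \cite{BT} exactly as the paragraph preceding Lemma \eqref{l5.1} indicates (the fully detailed version being in \cite{SR2017}). Write $\bar J(\rho)=\frac{1}{m}\sum_{i=1}^m\frac{1}{n}J_i(\rho)$ for the normalized random criterion that $\rho^*$ minimizes (note $\rho^*$ minimizes $J$ in \eqref{eq4.10} if and only if it minimizes $\bar J$, since $\frac1m$, $\frac1n$ are positive and independent of $\rho$), and $\bar J_0(\rho)=\frac{1}{m}\sum_{i=1}^m J_{i,0}(\rho)$ for the limiting deterministic criterion with $J_{i,0}$ as in \eqref{eq5.28}. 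I would organize the proof around verifying the three ingredients needed to invoke Theorem~4.2 of \cite{BT}: (1) a uniform strong law, $\sup_{\rho\in\Xi}|\bar J(\rho)-\bar J_0(\rho)|\to 0$ $P$--a.s. as $m,n\to\infty$ and $\tau\to0$ with $n\tau=T$; (2) continuity of $\rho\mapsto\bar J_0(\rho)$ on the compact set $\Xi$; and (3) that $\rho_0$ is the unique, well--separated minimizer of $\bar J_0$ over $\Xi$, with separation uniform in $m$.

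First I would observe that (1) is precisely Lemma \eqref{l5.1}: that lemma produces an event $A\in\Sigma$ with $P(A)=1$ on which $\frac1m\sum_{i=1}^m\{\frac1n J_i(\rho)-J_{i,0}(\rho)\}\to0$ uniformly in $\rho\in\Xi$, which is exactly (1). For (2), I would reduce continuity of $\rho\mapsto J_{i,0}(\rho)$, via the dominated convergence theorem on $[0,T]$, to continuity of the $t$--fixed maps $\rho\mapsto\mathcal{y}(t;\tilde u_i,\rho)$; this in turn follows from the representation \eqref{eq4.6}, the boundedness and coercivity of the averaged form $\mathcal{a}(\rho;\cdot,\cdot)$ (supplied uniformly in $\rho$ by Assumptions (i)--(iii)), the continuity hypothesis on $\rho\mapsto f(q;\rho)$ together with the two--sided bound $0<\gamma\le f(q;\rho)\le\delta<\infty$ of Assumption (iv) (so that $\mathcal{a}(\rho;\cdot,\cdot)$, $\mathcal{B}(\rho)$ and $\mathcal{C}(\rho)$ depend continuously on $\rho$ through the densities), and the standard fact that for a continuously varying family of regularly dissipative generators the resolvents, and hence the analytic semigroups $\mathcal{T}(t;\rho)$ uniformly on compact $t$--intervals, depend continuously on the parameter; $\bar J_0$ is then continuous as a finite average of continuous functions. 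For (3) I would use Assumption (e): for each $i$, $J_{i,0}(\rho)\ge J_{i,0}(\rho_0)=\sigma^2$ with equality only at $\rho=\rho_0$, so every average $\bar J_0$ attains its minimum over $\Xi$ uniquely at $\rho_0$; the positive identifiability gap that does not degrade as $m\to\infty$ is the uniform--in--$i$ strengthening of Assumption (e) built into the hypotheses of Theorem~4.2 of \cite{BT}.

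With (1)--(3) in hand, the argmin argument is routine and I would spell it out only briefly. Fix $\varepsilon>0$ and set $K_\varepsilon=\Xi\setminus B_\varepsilon(\rho_0)$, a compact set; by (2), (3) and compactness there is a gap $\eta_\varepsilon=\inf_m\inf_{\rho\in K_\varepsilon}(\bar J_0(\rho)-\bar J_0(\rho_0))>0$. Since $\rho^*$ minimizes $\bar J$ we have $\bar J(\rho^*)\le\bar J(\rho_0)$, whence $\bar J_0(\rho^*)\le\bar J_0(\rho_0)+2\sup_{\rho\in\Xi}|\bar J(\rho)-\bar J_0(\rho)|$; on the event $A$ the right--hand side is eventually $<\bar J_0(\rho_0)+\eta_\varepsilon$, which forces $\rho^*\notin K_\varepsilon$, i.e. $|\rho^*-\rho_0|<\varepsilon$, for all $(m,n,\tau)$ with $m,n$ large and $\tau$ small, $n\tau=T$. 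Since $\varepsilon>0$ was arbitrary, $\rho^*\to\rho_0$ on $A$, i.e.\ $P$--a.s., hence in $P$--probability, which is the assertion of the theorem.

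I expect the main obstacle to be item (3), the identifiability bookkeeping. The limiting criterion $\bar J_0$ genuinely depends on $m$ and on the realized input sequence $\{\tilde u_i\}$, so the per--index unique--minimizer statement of Assumption (e) has to be promoted to a uniform--in--$m$ well--separation condition (e.g.\ via a fixed finite pool of inputs, or an i.i.d.\ model for the $\tilde u_i$ yielding a single limit criterion $J_0^\infty$ minimized uniquely at $\rho_0$) before the final argument closes; correspondingly, the simultaneous passage $m,n\to\infty$, $\tau\to0$ with $n\tau=T$ must be threaded carefully through the uniform law of large numbers of Lemma \eqref{l5.1}. Since these are precisely the hypotheses and conclusion already packaged in Theorem~4.2 of \cite{BT}, the proof ultimately reduces to checking (1), (2) and that theorem's identifiability hypothesis in the present Bochner--space setting, the detailed verification being carried out in \cite{SR2017}.
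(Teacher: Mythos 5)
Your proposal is correct and follows essentially the same route the paper takes: the paper gives no detailed proof, stating only that Theorem \ref{th5.4} follows from ``a straightforward application of Theorem 4.2 in \cite{BT}'' (with details deferred to \cite{SR2017}), and your argument is precisely that application --- uniform convergence of the normalized criterion from Lemma \ref{l5.1}, continuity of the limiting criterion, the identifiability of $\rho_0$ from Assumption (e), and the standard well-separated-argmin step (which in fact yields the stronger $P$-a.s.\ convergence). Your closing caveat about promoting the per-$i$ uniqueness in Assumption (e) to a uniform-in-$m$ separation is a legitimate observation about where the hypotheses of Theorem 4.2 of \cite{BT} do the real work, and does not constitute a gap in your argument.
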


\section{Examples and Numerical Results}\label{num.res}

\subsection{The Adjoint Method}\label{notes.num.appr}

The approximating optimization problems are solved numerically by using an iterative gradient-based scheme. Once a basis for the space $^N$ is chosen, matrix forms of the operators $\hat{\mathcal{A}}^N$, $\hat{\mathcal{B}}^N$, and $\hat{\mathcal{C}}^N$ can be computed. The gradient of $J^N(\rho)$, with respect to the $2p + r$ parameters in $\rho$ can be computed accurately (in fact exactly with the exception of finite precision arithmetic round-off) and efficiently (which is especially important if the dimension of the approximating system \eqref{eq5.5} and/or the number of parameters is large) using the adjoint method (see \cite{LR}). For each $i=1,...,m$, set $v_{i,j}^N=2[\hat{\mathcal{C}}^N]^T(\hat{\mathcal{C}}^N\mathcal{x}_{i,j}^N-\tilde{y}_{i,j}) \in\mathbb{R}^{K^N}$, $j=0,...,n_i$ where $K^N$ is the number of basis elements for $\mathcal{U}^N$. Then for each $i=1,...,m$, the adjoint systems are defined to be
\begin{align}
    z_{i,j-1}^N=[\hat{\mathcal{A}}^N]^Tz_{i,j}^N+v_{i,j-1}^N, z_{i,n_i} = v_{i,n_i}^N, j=n_i,n_i-1,...,2,1.\label{eq6.1}
\end{align}
The gradient of $J^N$ at $\rho = (\vec{a},\vec{b},\vec{\theta})$ can then be computed from
%
\begin{align}
    \begin{aligned}
        \vec{\nabla}J^N(\rho)&=\sum_{i=1}^m\sum_{j=1}^{n_i}[z_{i,j}^N]^T\biggl(\frac{\partial\hat{\mathcal{A}}^N}{\partial\rho}\mathcal{x}_{i,j-1}^N\\
        &\quad-(\mathcal{A}^N)^{-1}\biggl(\frac{\partial\mathcal{A}^N}{\partial\rho}(\mathcal{A}^N)^{-1}(\hat{\mathcal{A}}^N-I)\mathcal{B}^N\tilde{u}_{i,j-1}\\
        &\quad-\frac{\partial\hat{\mathcal{A}}^N}{\partial\rho}\mathcal{B}^N\tilde{u}_{i,j-1}-(\hat{\mathcal{A}}^N-I)\frac{\partial\mathcal{B}^N}{\partial\rho}\tilde{u}_{i,j-1}\biggr)\biggr)\\
        &\quad+\sum_{i=1}^m\sum_{j=0}^{n_i}\bigl(\mathcal{y}_j^N-\tilde{y}_{i,j}\bigr)^T\frac{\partial\hat{\mathcal{C}}^N}{\partial\rho}\mathcal{x}_{i,j}^N.\label{eq6.2}
    \end{aligned}
\end{align}
Using \eqref{eq6.1} and \eqref{eq6.2} to compute the gradient requires the calculation of the tensor $\frac{\partial\hat{\mathcal{A}}^N}{\partial\rho}$. This can be done using the sensitivity equations. For $t\geq0$ set $\Phi^N(t)=e^{\mathcal{A}^N(t)}$ from which differentiation yields
\begin{align}
    \begin{aligned}
        \dot{\Phi}^N(t)=\mathcal{A}^N\Phi^N(t), \ \ \Phi^N(0)=I.\label{eq6.3}
    \end{aligned}
\end{align}
Then, setting $\Psi^N(t)=\frac{\partial\Phi^N(t)}{\partial\rho}$, differentiating \eqref{eq6.3} with respect to $\rho$, and interchanging the order of differentiation, we obtain
\begin{align}
    \begin{aligned}
        \dot{\Psi}^N(t)=\mathcal{A}^N\Psi^N(t)+\frac{\partial\mathcal{A}^N}{\partial\rho}\Phi^N(t), \ \ \Psi^N(0)=0.\label{eq6.4}
    \end{aligned}
\end{align}
Combining \eqref{eq6.3} and \eqref{eq6.4}, and solving the resulting system, we obtain
\begin{align}
\begin{bmatrix}
\Psi^N(t)\\
\Phi^N(t)
\end{bmatrix}
=exp\biggl(
\begin{bmatrix}
\mathcal{A}^N & \partial\mathcal{A}^N/\partial\rho\\
0 & \mathcal{A}^N
\end{bmatrix}
\tau\biggr)
\begin{bmatrix}
0\\
I
\end{bmatrix}\label{eq6.5}
\end{align}
Setting $t=\tau$ in \eqref{eq6.5}, we obtain that $\frac{\partial\hat{\mathcal{A}}^N}{\partial\rho}=\Psi^N(\tau)$.

To illustrate our approach, we consider the case of a one dimensional heat/diffusion equation on the interval $[0,1]$ with random (thermal) diffusivity and two different sets of boundary conditions. Consider the partial differential equation, boundary conditions and output operator given by
\begingroup
\allowdisplaybreaks
\begin{align}
    \frac{\partial x}{\partial t}(t,\eta)&=q_1\frac{\partial^2 x}{\partial \eta^2}(t,\eta), \ \ 0<\eta<1, \ \ t>0,\label{eq6.6}\\
    \Gamma_Dx(t,\cdot)&=x(t,0)=0, \ \ t>0,\label{eq6.7}\\
    \Gamma_Rx(t,\cdot)&=q_1\frac{\partial x}{\partial\eta}(t,0)-x(t,0)=0, \ \ t>0,\label{eq6.8}\\
    \Gamma_1x(t,\cdot)&=\frac{q_1}{q_2}\frac{\partial x}{\partial\eta}(t,1)=u(t) \ \ t>0,\label{eq6.9}\\
    x(0,\eta)&=0, \ \ 0<\eta<1,\label{eq6.10}\\
    y(t)&=x(t,\eta_0), \ \ t>0,\label{eq6.11}
\end{align}
\endgroup
where $0 < \eta_0 < 1.$ In the examples below, we consider the parameterized family of probability density functions defined as follows.

\begin{definition}\label{trun.exp.fam} Let $\varphi(q;\theta)$, $q\in\mathbb{R}^n$ be a member in an exponential family \cite{CB}, and let $\Phi$ denote its cumulative distribution function. Let $\theta$ represent a vector of parameters, and let $D\subset\mathbb{R}^n$ be a bounded region to which $\varphi$ will be restricted. Then define $\Phi_D(\theta)=\int_D\varphi(q;\theta)dq$. Then the family of pdfs, $f(\cdot,\rho)$ given by 
\begin{align*}
    f(q;\rho)=\frac{\varphi(q;\theta)\chi_D(q)}{\Phi_D(\theta)}=\frac{1}{\Phi_D(\theta)}h(q)c(\theta)exp\left(\sum_{i=1}^kw_i(\theta)t_i(q)\right)\chi_D(q)
\end{align*}
where the parameters $\rho$ include the parameters $\theta$ and parameters $\vec{a}$ and $\vec{b}$ to describe the domain $D$, is called a \textit{truncated exponential family}.
\end{definition}
It is clear that this family of densities satisfies Assumption (iv) and the hypotheses of Theorem \eqref{th5.1}. 

All of the numerical results presented here use simulation data. Our studies involving actual experimental/clinical data are discussed elsewhere (see \cite{SR2017}). The simulated data was generated by first sampling the target distribution to obtain 100 samples $q$ of $\mathcal{q}$. A spline based Galerkin approximation to the system \eqref{eq6.6} -\eqref{eq6.11} using a 128 equally spaced point grid on $[0,1]$ was then solved using each $\mathcal{q}$-sample. The resulting 100 output signals were then averaged at each time point. The approximating estimation problems were all solved on either MAC or PC laptops using the Matlab optimization toolbox routine FMINCON for constrained optimization. Gradients were computed using either FMINCON built-in finite differencing or the adjoint method, \eqref{eq6.1}-\eqref{eq6.5}. Which method was used had only a negligible effect on the results. The input signal used was $u(t) = |cos(t)|\chi_{[0,2]}(t)$, $t \in [0,20]$, and the sampling interval was $\tau = 0.1$.  In all of our examples below, the admissible parameter space $Q$ is assumed to be either in $\mathbb{R}^+$ in the case of the uni-variate examples, or in the fist quadrant of the plane $\mathbb{R}^2$ in the bivariate examples.  Consequently when the approximating optimization problems were solved, the lower bounds for the supports of the random parameters, $a$ and $c$, were constrained to be strictly positive. This is based on the requirements of the physical model \eqref{eq6.6}-\eqref{eq6.11} and the assumption that properties (i)-(iii) in Section 3 hold.

\subsection{Examples 6.1,6.2 and 6.3; One Random Parameter; Truncated Uniform, Exponential and Normal Distributions}\label{first.examp}

In this series of examples we consider the system \eqref{eq6.6},\eqref{eq6.7},\eqref{eq6.9}-\eqref{eq6.11} with $q_1$ random and $q_2=1$. In this case we have $q=q_1\in Q=[a,b]$, $W=\{\varphi\in H^2(0,1), \Gamma_D\varphi=0\}$, $H=H_L^1(0,1)=\{\varphi\in H^1(0,1),\Gamma_D\varphi=0\}$, $Dom(A(q))=\{\varphi\in V: \Gamma_1\varphi=0\}$, and $\Gamma(q)=\Gamma_1$. It follows that
\begin{align*}
    a(q;\varphi,\psi)=q\int_0^1\varphi'(\eta)\psi'(\eta)d\eta, \ \ \varphi,\psi\in V,
\end{align*}
and $\left<b(q),\psi\right>_{V^*,V}=\left<b,\psi\right>_{V^*,V}=\psi(1)=\delta(\cdot-1), \ \ \psi\in V$, and $ \left<c(q),\psi\right>_{V^*,V}=\left<c,\psi\right>_{V^*,V}=\psi(1/3), \ \ \psi\in V$, where in this case $\eta_0 = 1/3$. Standard arguments \cite{BI,BK} show that Assumptions (i)-(iii) are satisfied.

To carry out the finite dimensional discretization, we let $n,m$ be positive integers and set $N=(n,m)$. In this case we have either $D=[a,b]$ (uniform and normal) or $D=[0,R]$ (exponential). In what follows we describe the $q$ or $Q$ discretization for the uniform and normal cases; the exponential is similar. The basis for the approximating subspaces $\mathcal{U}^N$ were taken to be tensor products of the standard linear spline basis elements $\varphi^n_i$ corresponding to the uniform mesh $\{0,\frac{1}{n}, \frac{2}{n},...,\frac{n-1}{n},1\}$ on $[0,1]$, and the characteristic function basis $\chi^m_j$ for the interval $[a,b]$.  The $j^{th}$ element corresponds to the $j^{th}$ sub-interval $[a+(j-1)\frac{b-a}{m}),a+j\frac{b-a}{m})$, $j = 1,2,...,m$. In this way $\mathcal{U}^N=span\{\xi_{i,j}^N\}$, $i=1,2,...,n$, $j=1,2,...,m$ where $\xi_{i,j}^N(\eta,q)=\varphi_i^n(\eta)\chi_j^m(q)$, $\eta\in[0,1]$, $q\in[a,b]$ with $dim(\mathcal{U}^N)=nm$. Using standard estimates \cite{SCH} it is not difficult to show that Assumption (v) holds. 

Re-numbering $\xi_{i,j}^N$'s so that $\xi_{i,j}^N=\xi_k^N$ where $k=(i-1)n+j$ and letting $\Psi_k^N=[\psi_i^N]_{i=1}^{nm}\in\mathbb{R}^{nm}$, the matrix representation for the operators $\mathcal{A}^N$ are given by $[\mathcal{A}^N]=-(M^N)^{-1}K^N$ with
\begin{align*}
    M_{r,s}^N&=M_{r,s}^N(a,b,\theta)=\left<\xi_r^N,\xi_s^N\right>_{\mathcal{H}}\\
    &=\int_a^b\int_0^1\xi_r^N\xi_s^Nf(q;a,b,\theta)d\eta dq=\int_a^b\chi_j^m\chi_l^mf(q;a,b,\theta)dq\int_0^1\varphi_i^n\varphi_k^nd\eta,
\end{align*}
%
%
\begin{align*}
    K_{r,s}^N&=K_{r,s}^N(a,b,\theta)=\mathcal{a}(q;\xi_r^N,\xi_s^N)=\int_a^bq\int_0^1\frac{\partial\xi_r^N}{\partial\eta}\frac{\partial\eta_s^N}{\partial\eta}f(q;a,b,\theta)d\eta dq\\
    &=\int_a^bq\chi_j^m\chi_l^mf(q;a,b,\theta)dq\int_0^1\varphi_i^{n\prime} \varphi_k^{n\prime}d\eta,
\end{align*}
where $r=(j-1)n+i$, $s=(l-1)n+k$, $i,k=1,2,...,n$, $j,l=1,2,...,m$.

We also have
\begin{align*}
    B_r^N&=B_r^N(a,b,\theta)=\int_a^b\xi_r^N(1,q)f(q;a,b,\theta)dq=\varphi_i^n(1)\int_a^b\chi_j^mf(q;a,b,\theta)dq,
\end{align*}
%
%
\begin{align*}
    C_s^N(a,b,\theta)=\int_a^b\xi_s^N(1/3,q)f(q;a,b,\theta)dq-\varphi_k^n(1/3)\int_a^b\chi_l^m(q)f(q;a,b,\theta)dq,
\end{align*}
$r,s=1,2,...,nm$, $r=(j-1)n+i$, $s=(l-1)n+k$, $i,k=1,2,...,n$, $j,l=1,2,...,m$.

With the density $f=f_0(\cdot;\rho) = f_0(\cdot;(a,b,\theta))$ as given in Definition \eqref{trun.exp.fam} above, if we define
\begin{align*}
    f_1(\alpha,\beta;\rho)=\int_{\alpha}^{\beta}f(q;\rho)dq \ \ \ \ \text{and} \ \ \ \ f_2(\alpha,\beta;\rho)=\int_{\alpha}^{\beta}qf(q;\rho)dq,
\end{align*}
it is a straightforward, albeit somewhat tedious, exercise to compute the partial derivatives $\frac{\partial f_i}{\partial\alpha}$, $\frac{\partial f_i}{\partial\beta}$, $\frac{\partial f_i}{\partial\theta}$, $\frac{\partial f_i}{\partial a}$, $\frac{\partial f_i}{\partial b}$, $i=0,1,2$. These partial derivatives show up in the matrices that appear in the adjoint equations \eqref{eq6.1}-\eqref{eq6.5}. We tested our scheme on truncated uniform ($\rho=(a,b)$), exponential ($\rho=(R,\theta)$) and normal ($\rho=(a,b,\mu,\sigma)$) distributions. Our results are shown in Table \eqref{tab1} and Figure \eqref{fig1} below. In panels (a) - (c) of Figure \eqref{fig1}, we have plotted the converged estimated population models together with the data and the 75\% credible band for the truncated uniform, exponential and normal densities. The credible bands can be obtained directly from the solution to the population model.  Indeed, $\mathcal{q}$ is sampled using the estimated distribution and then $C(\mathcal{q})\mathcal{x}_{j}^N(\cdot,\mathcal{q})$ is evaluated at the sample q's where $\mathcal{x}_{j}^N$ is given by \eqref{eq5.5}. Now the $q$ dependence of the solution to the population model is only valid $\pi$ almost everywhere and our convergence framework is an $L_2$ (in $q$) theory.  Consequently, pointwise evaluation is, strictly speaking, undefined. However, the results appear to be useful so we have included them.   We are currently working on an extension of the results presented here that involves introducing parabolic regularization in $q$.  This will potentially allow us to justify pointwise evaluation in $q$ of the population model to obtain credible band.  It is interesting to note that the credible band for the exponential distribution is quite wide, almost to the point of making the population model not that useful. This is because the exponential distribution, especially one with a mean and variance of $\mu = 1/\theta = 3$, has a rather "fat" tail. Panels (d) and (f) of Figure \eqref{fig1} show the converging estimated pdfs for the truncated exponential and normal distributions, respectively.  Panel (e) shows how the output of the population model compares to the data when the resolution of the finite element discretizations of $q$ and $\eta$ and the truncation point of the densities are varied. It appears from the figure that it is the $q$ discretization that determines the rate of convergence, while a rather coarse $\eta$ discretization seems to suffice. We believe that this explains the slow convergence of $\theta$ (the exponential parameter) and $\sigma$ (the standard deviation of the normal) observed in Table \eqref{tab1} and panel (f) of Figure \eqref{fig1}. The truncation of the density appears to have only a negligible effect. We are currently investigating whether using smoother first order splines for the $q$ elements produces improved estimates and more rapid convergence.        
\begin{table}[ht]
\centering
\begin{tabular}{| c | c || c | c | c | c | c | c | c | c |}
\hline
\multicolumn{2}{|c||}{\boldmath$N$}  & \multicolumn{2}{c|}{\textbf{Uniform}} & \multicolumn{2}{c|}{\textbf{Exponential}} & \multicolumn{4}{c|}{\textbf{Normal}}                         \\ \hline
\boldmath$n$       & \boldmath$m$     & \boldmath$a^*$       & \boldmath$b^*$       & \boldmath$\theta^*$       & \boldmath$R^*$      & \boldmath$a^*$ & \boldmath$b^*$ & \boldmath$\mu^*$ & \boldmath$\sigma^*$ \\ \hline
4                & 4              & 1.76              & 4.27              & 2e-5                   & 3.61             & 2.61        & 5.44        & 4.05          & 0.62             \\
8                & 8              & 1.91              & 4.05              & 4e-5                   & 3.81             & 2.29        & 5.42        & 4.01          & 0.40             \\
16               & 16             & 1.94              & 4.00              & 0.20                   & 4.34             & 2.17        & 5.42        & 4.01          & 0.37             \\
32               & 32             & 1.95              & 3.99              & 0.30                   & 5.95             & 2.15        & 5.42        & 4.00          & 0.35             \\
64               & 64             & 1.96              & 3.99              & 0.30                   & 11.08            & 2.14        & 5.42        & 4.00          & 0.35                 \\ \hline
\multicolumn{2}{|c||}{True Values} & 2                 & 4                 & 1/3                    & ---      & ---                 & ---        & 4             & 0.25             \\ \hline
\end{tabular}
\caption{Convergence results for Examples 6.1, 6.2 and 6.3; estimation of the parameters in truncated uniform, exponential and normal distributions.}
\label{tab1}
\end{table}

\begin{figure}[ht]
\begin{center}
\includegraphics[scale=0.19]{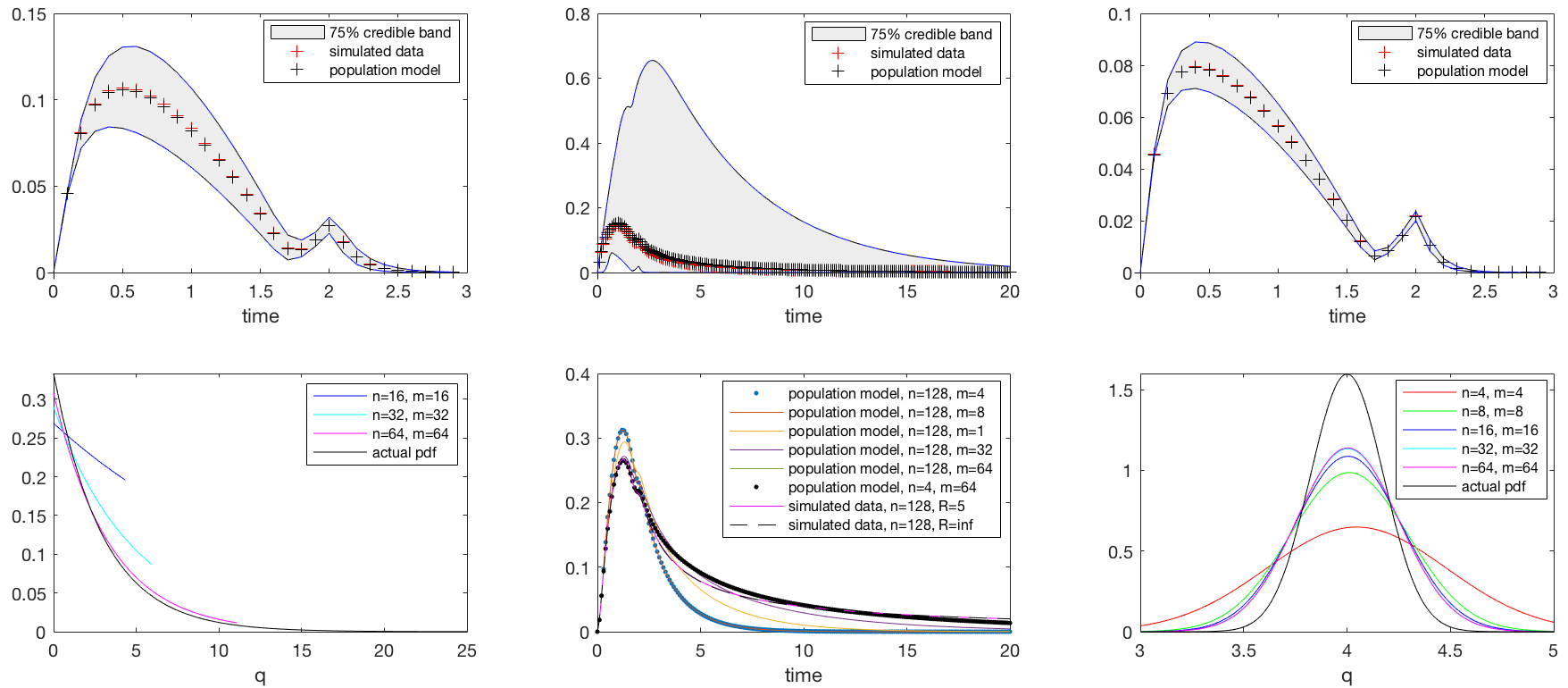}
\caption{Top row, starting from the left: Data, converged estimated population model and $75\%$ credible band for (a) Example 6.1 Truncated uniform distribution; (b) Example 6.2 Truncated exponential distribution; (c) Example 6.3 Truncated normal distribution. Bottom row, starting from the left: (d) Example 6.2 Converged pdfs for truncated exponential distribution; (e) Example 6.2 Data and Estimated population model for various values of $R$, $n$ and $m$; (f) Example 6.3 Converged pdfs for truncated normal distribution.}
\label{fig1}
\end{center}
\end{figure}

\subsection{Example 6.4; Two Random Parameters; Truncated Bi-variate Normal Distribution}\label{second.examp}

In this example we consider the system \eqref{eq6.5}-\eqref{eq6.11}, but instead of the Dirichlet boundary condition \eqref{eq6.2} at $\eta=0$, we take the Robin boundary condition \eqref{eq6.3} at $\eta=0$. In this case, $q=[q_1,q_2]$ is the vector of random parameters with $q\in D = Q=[a,b]\times[c,d]$, $H=L^2(0,1)$, $V=H^1(0,1)$, $W=H^2(0,1)$, and $Dom(A(q))=\{\varphi\in H^2(0,1):\Gamma_R\varphi=0, \Gamma_1\varphi=0\}$ and $\Gamma(q)=\Gamma_1$. The sesquilinear form on $V\times V$ is given by $a(q;\varphi,\psi)=q_1\int_0^1\varphi'\psi'd\eta+\varphi(0)\psi(0)$ with $<b(q),\psi>_{V^*,V}=q_2\psi(1)=q_2\delta(\cdot-1)$, $\psi\in V$, and $<c(q),\psi>_{V^*,V}=<c,\psi>_{V^*,V}=\psi(0)$, $\psi\in V$ where we have set $\eta_0=0$. In this case $N=(n,m_1,m_2)$, where $n$ is again the level of discretization of the space variable $\eta$ and $m_i$ is the level of discretization of $q_i$, $i=1,2$. Once again the approximating subspaces were constructed using tensor products, $\mathcal{U}^N=span\{\xi_{i,j,k}^N\}$, $i=0,1,2,...,n$, $j=1,2,...,m_1$, $k=1,2,...,m_2$ where $\xi_{i,j,k}^N(\eta,q_1,q_2)=\varphi_i^n(\eta)\chi_j^{m_1}(q_1)\chi_k^{m_2}(q_2)$, $\eta\in[0,1]$, $q_1\in[a,b]$, $q_2\in[c,d]$ with $dim(\mathcal{U}^N)=(n+1)m_1m_2$.

In this example the truncated exponential family was based on the bivariate normal. Once again, it is possible to compute all the partial derivatives (although of course their evaluation requires the numerical evaluation of single and double integrals) that are required to form the matrices that appear in the state and adjoint equations \eqref{eq6.1}-\eqref{eq6.5}. We obtained simulated data by generating samples for $\mathcal{q}$ from a $N(\bar{\mu},\bar{\Sigma})$ distribution with $\bar{\mu}=\begin{bmatrix}12\\ 10\end{bmatrix}$ and $\bar{\Sigma}=
\begin{bmatrix}
9 & 3\\
3 & 5
\end{bmatrix}$.
%
%
\begin{table}[H]
\centering
\begin{tabular}{ | c | c | c || c | c | c | c | c | c |}
 \hline
 \boldmath$n$ & \boldmath$m_1$ & \boldmath$m_2$ & \boldmath$a^*$ & \boldmath$b^*$ & \boldmath$c^*$ & \boldmath$d^*$ & \boldmath$\mu^*$ & \boldmath$\sigma^*$\\
 \hline
 4 & 8 & 8 & 5.88 & 18.15 & 4.85 & 14.63 & $\begin{bmatrix}11.72\\9.88\end{bmatrix}$ & $\begin{bmatrix}
12.13 & 5.76\\
5.76 & 7.35
\end{bmatrix}$\\
 8 & 8 & 8 & 5.67 & 18.35 & 5.17 & 14.46 & $\begin{bmatrix}11.68\\9.87\end{bmatrix}$ & $\begin{bmatrix}
10.15 & 4.04\\
4.04 & 5.97
\end{bmatrix}$\\
 16 & 8 & 8 & 5.79 & 18.17 & 5.06 & 14.66 & $\begin{bmatrix}11.67\\9.86\end{bmatrix}$ & $\begin{bmatrix}
9.29 & 3.03\\
3.03 & 5.21
\end{bmatrix}$\\
 \hline
\end{tabular}
\caption{Convergence results for Example 6.4; estimation of the parameters in truncated bivariate normal distribution.}
\label{tab2}
\end{table}
Our results are shown in Table \eqref{tab2} and Figure \eqref{fig2}, where it can be seen that we obtained reasonably good approximations to the actual parameters that we used to simulate the data. We parameterized the covariance matrix as $\Sigma=L^TL$, where the $2 \times 2$ matrix $L$ is upper triangular with $L_{11}$ and $L_{22}$ both positive so as to guarantee that at each step in the optimization, $\Sigma$ is positive definite symmetric. The plot of the optimal joint density in the left hand panel of Figure \eqref{fig2} correspond to $n=16$ and $m_1=m_2=8$. In the right hand panel of Figure \eqref{fig2} we have plotted the output of the fit population model and the $75\%$ credible band.  Once again, we believe that the rate of convergence could be improved by using linear splines rather than piece-wise constant elements to discretize the random parameters $q$. 
\begin{figure}[ht]
\begin{center}
\includegraphics[scale=0.24]{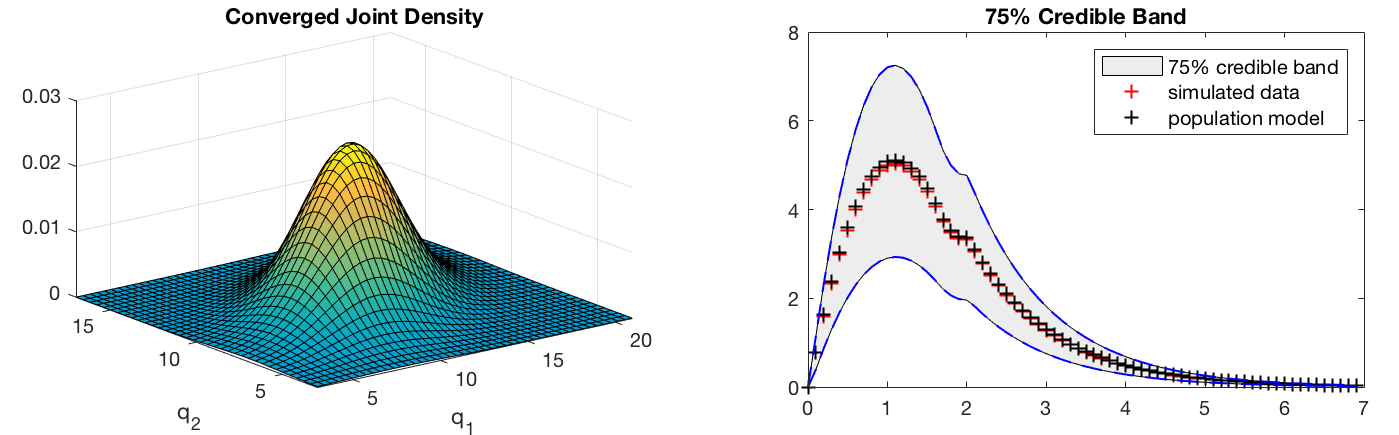}
\caption{Left hand panel: Example 6.4 Estimated bivariate normal joint density with $n=16$ and $m_1=m_2=8$; Right hand panel: Example 6.4 Data, estimated population model and $75\%$ credible band for truncated bivariate normal distribution.}
\label{fig2}
\end{center}
\end{figure}

\section{Concluding Remarks}\label{conc.rem}

We are currently working on a number of applications and extensions of the results presented here. Specifically, we are looking at applying our approach to actual experimental and clinical BrAC and TAC data collected in both the lab/clinic and the field using two different transdermal alcohol biosensors from a number of different individuals that include several drinking episodes occurring over a time period of several days. We are developing deconvolution schemes based on population models fit using the approach discussed here that, given an output signal, will provide a population based estimate for the input together with credible bands obtained directly from the deconvolved input signal and not requiring simulation. We are also looking at extensions of the ideas presented here to the solution of the LQR and LQG compensator problems wherein the infinite dimensional linear regularly dissipative dynamics and quadratic performance index involve random parameters.

In our treatment here, we assumed that the probability measures describing the distribution of the random parameters were defined in terms of parameterized families of joint density functions. We are looking at developing numerical schemes and an associated convergence theory for estimating the shape of the density directly. We also hope to be able to apply the convergence theory based on the Prohorov metric on a space of measures developed in \cite{BT} more directly to the class of problems that we have discussed here. More precisely, we would like to be able to eliminate the assumption that the measures are defined in terms of a density, and estimate the measure directly. We believe that such a theory may be possible by assuming that our approximating subspaces are required to satisfy additional regularity (i.e. smoothness) assumptions; in particular that they are required to be contained in the domain of the operator. Then by making use of a slightly different version of the Trotter-Kato semigroup approximation theorem (see, for example, \cite{BBC}) we believe it may now be possible to verify the hypotheses of the more general convergence theorem established in \cite{BT} for the estimation of the probability measures directly, rather than by estimating an associated density.

\section*{References}
\bibliography{main}{}
\bibliographystyle{plainurl}

\end{document}